\numberwithin{equation}{section}
\theoremstyle{plain}
\newtheorem{theorem}{Theorem}[section]
\newtheorem{definition}[theorem]{Definition}
\newtheorem{lemma}[theorem]{Lemma}
\newtheorem{prop}[theorem]{Proposition}
\newtheorem{cor}[theorem]{Corollary}
\newtheorem{rem}[theorem]{Remark}
\newcommand{\ov}[1]{\overline{#1}}
\begin{document}
\title{Quasi-K\"ahler manifolds with trivial Chern Holonomy}
\author{Antonio J. Di Scala and Luigi Vezzoni}
\date{\today}
\address{Dipartimento di Matematica
\\ Politecnico di Torino \\ Corso Duca degli Abruzzi 24, 10129 Torino,
Italy} \email{antonio.discala@polito.it}
\address{Dipartimento di Matematica \\ Universit\`a di Torino\\
Via Carlo Alberto 10 \\
10123 Torino\\ Italy} \email{luigi.vezzoni@unito.it}
\subjclass[2000]{Primary 53B20 ; Secondary 53C25}
\thanks{This work was supported by the Project M.I.U.R. ``Riemann Metrics and  Differentiable Manifolds''
and by G.N.S.A.G.A. of I.N.d.A.M.}
\begin{abstract}
In  this paper we study
almost complex manifolds admitting a quasi-K\"ahler Chern-flat metric (Chern-flat means that
the holonomy of the Chern connection is trivial). We prove that in the compact case such manifolds are all nilmanifolds.
Some partial classification results are established and
we prove that a quasi-K\"ahler Chern-flat structure can be tamed by a symplectic form if and only if the ambient space is isomorphic to a  flat torus.
\end{abstract}
\maketitle
\newcommand\C{{\mathbb C}}
\newcommand\R{{\mathbb R}}
\newcommand\Z{{\mathbb Z}}
\newcommand\T{{\mathbb T}}
\newcommand\GL{{\rm GL}}
\newcommand\SL{{\rm SL}}
\newcommand\SO{{\rm SO}}
\newcommand\Sp{{\rm Sp}}
\newcommand\U{{\rm U}}
\newcommand\SU{{\rm SU}}
\newcommand{\Gdue}{{\rm G}_2}
\newcommand\re{\,{\rm Re}\,}
\newcommand\im{\,{\rm Im}\,}
\newcommand\id{\,{\rm id}\,}
\newcommand\tr{\,{\rm tr}\,}
\renewcommand\span{\,{\rm span}\,}
\newcommand\Ann{\,{\rm Ann}\,}
\newcommand\Hol{{\rm Hol}}
\newcommand\Ric{{\rm Ric}}
\newcommand\nc{\widetilde{\nabla}}
\renewcommand\d{{\partial}}
\newcommand\dbar{{\bar{\partial}}}
\newcommand\s{{\sigma}}
\newcommand\sd{{\bigstar_2}}
\newcommand\K{\mathbb{K}}
\renewcommand\P{\mathbb{P}}
\newcommand\D{\mathbb{D}}
\newcommand\al{\alpha}
\newcommand\f{{\varphi}}
\newcommand\g{{\frak{g}}}
\renewcommand\k{{\kappa}}
\renewcommand\l{{\lambda}}
\newcommand\m{{\mu}}
\renewcommand\O{{\Omega}}
\renewcommand\t{{\theta}}
\newcommand\ebar{{\bar{\varepsilon}}}
\newcommand{\dis}{\di^{\circledast}}
\newcommand{\fo}{\mathcal{F}}
\newcommand{\op}{\overline{\parzial}}
\newcommand{\wn}{\widetilde{\nabla}}
\section{Introduction}
A K\"ahler manifold is a complex manifold $(M,J)$ equipped with a compatible Hermitian metric $g$ whose Levi-Civita connection preserves $J$. The
condition $\nabla J=0$ imposes a strong constraint on the curvature of $g$. In particular if $(M,J,g)$ is a compact
flat K\"ahler manifold, then it is isomorphic to a flat torus. K\"ahler manifolds can be seen as a subclass
of \emph{almost Hermitian manifolds}.
An almost Hermitian manifold is a smooth manifold equipped with a not-necessarily integrable ${\rm U}(n)$-structure $(g,J)$. Such a
manifold differs from
the K\"ahler one by the non-vanishing of the covariant derivative $\nabla J$, so in the almost Hermitian case it is
useful to study the geometry related to
connections with torsion preserving $g$ and $J$, in place of the geometry of the Levi-Civita connection.
This kind of connection is usually called \emph{Hermitian} (see e.g. \cite{gau}).

Any almost Hermitian manifold $(M,g,J)$
always admits a unique Hermitian connection $\widetilde{\nabla}$ whose torsion has
everywhere vanishing $(1,1)$-part. This connection was introduced by
Ehresmann and Libermann in \cite{EL} and for a K\"ahler manifold coincides with
the Levi-Civita connection. In the complex case, $\widetilde{\nabla}$ coincides with the connection used by Chern in \cite{Chern}
and is called the \emph{Chern connection} (in many papers it is also called the \emph{canonical Hermitian connection},
the \emph{second canonical Hermitian connection}
or simply the \emph{canonical connection}). Such a connection and its associated curvature tensor $\widetilde{R}$
plays a central role in almost Hermitian geometry. For instance, $\widetilde{R}$ appears in the Sekigawa's proof
of  the
Goldberg conjecture for Einstein manifolds of non-negative scalar curvature (see \cite{SEk}) and
recently Tosatti, Weinkove and Yau proved a Donaldson-type conjecture under a positivity assumption on $\widetilde{R}$ (see \cite{yau}).
The role of $\widetilde{\nabla}$ is also important in nearly K\"ahler geometry, since in this special case it
has parallel and totally skew-symmetric torsion (see e.g. \cite{FI,nagyasian} and the references therein).
\\

We recall that a \emph{quasi-K\"ahler}
structure is an almost Hermitian structure whose K\"ahler form $\omega$ satisfies $\overline{\partial}\omega=0$.
The aim of this paper is to study almost complex manifolds admitting a compatible quasi-K\"ahler metric
for which the holonomy associated to its Chern connection is trivial. Such manifolds will be called \emph{quasi-K\"ahler Chern-flat}.
The point is that for quasi-K\"ahler manifolds, the Chern connection can be described by a very simple formula involving  only
the Levi-Civita connection and the almost
complex structure.

The canonical example of a quasi-K\"ahler Chern-flat manifold is the Iwasawa manifold  $M$ equipped with the almost Hermitian
structure associated to the almost complex structure $J_3$ defined
in \cite[p.151]{elsa}. The structure
$J_3$ is the unique invariant almost complex structure on $M$ such that its associated fundamental form $\omega_3$ with respect to a
canonical metric of $M$ is quasi-K\"ahler and not-symplectic. The pair $(M,J_3)$ is a compact almost complex manifold obtained
as a quotient of an almost complex $2$-step nilpotent Lie algebra by a lattice. We prove that all compact quasi-K\"ahler Chern-flat manifolds have this structure.
\begin{theorem}\label{main}
A compact almost complex manifold $(M,J)$ is quasi-K\"ahler Chern-flat if and only if it is isomorphic to a $2$-step nilpotent
 nilmanifold equipped
with a left-invariant almost complex structure whose almost complex Lie algebra $(\g,J)$ satisfies
\begin{equation}\label{stru}
[\g^{1,0},\g^{0,1}]=0\,,\quad [\g^{1,0},\g^{1,0}]\subseteq \g^{0,1}\,,
\end{equation}
being $\g\otimes\C=\g^{1,0}\oplus\g^{0,1}$ the splitting of $\g\otimes \C$ in terms of the eigenspaces of $J$.
\end{theorem}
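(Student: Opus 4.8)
The plan is to treat the two implications separately, the substantial direction being necessity. Assume $(M,J,g)$ is compact, quasi-K\"ahler and Chern-flat. Triviality of the holonomy of $\widetilde{\nabla}$ means that parallel transport is path-independent, so there is a global $\widetilde{\nabla}$-parallel unitary frame $Z_1,\dots,Z_n$ of $T^{1,0}M$; let $\theta^1,\dots,\theta^n$ be the dual $(1,0)$-coframe. Since the $Z_i$ are parallel, every Lie bracket is read off from the torsion: $[Z_i,Z_j]=-\widetilde{T}(Z_i,Z_j)$, $[Z_i,\bar Z_j]=-\widetilde{T}(Z_i,\bar Z_j)$, $[\bar Z_i,\bar Z_j]=-\widetilde{T}(\bar Z_i,\bar Z_j)$. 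The defining property of the Chern connection, namely the vanishing of the $(1,1)$-part of the torsion, gives $\widetilde{T}(Z_i,\bar Z_j)=0$, hence $[\g^{1,0},\g^{0,1}]=0$ already at the pointwise level. The quasi-K\"ahler hypothesis $\dbar\omega=0$ (equivalently $d\omega\in\Lambda^{3,0}\oplus\Lambda^{0,3}$) forces the $\g^{1,0}$-valued part of $\widetilde{T}(Z_i,Z_j)$, which is the component governed by $\partial\omega$, to vanish; thus $[Z_i,Z_j]\in\g^{0,1}$, i.e. $[\g^{1,0},\g^{1,0}]\subseteq\g^{0,1}$. Writing $[Z_i,Z_j]=c_{ij}^{\bar k}\bar Z_k$, this establishes \eqref{stru} for the a priori non-constant structure functions $c_{ij}^{\bar k}$.

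The crux, and the step where compactness is essential, is to promote the $c_{ij}^{\bar k}$ to constants, i.e. to show that $\widetilde{T}$ is $\widetilde{\nabla}$-parallel. First, the Jacobi identity applied to $(Z_i,Z_j,\bar Z_l)$, together with $[\g^{1,0},\g^{0,1}]=0$, yields $\bar Z_l(c_{ij}^{\bar k})=0$ for all $l$, so each $c=c_{ij}^{\bar k}$ satisfies $\dbar c=0$. To kill the holomorphic derivatives I would run a Bochner-type argument. Since $Z_l\bar c=\overline{\bar Z_l c}=0$ and $[\bar Z_l,Z_l]=0$, a direct computation gives $\bar Z_l Z_l(|c|^2)=|Z_l c|^2$, whence $\sum_l\bar Z_l Z_l(|c|^2)=\sum_l|Z_l c|^2\ge 0$. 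On the other hand the relevant trace of the Chern torsion vanishes (again because $\widetilde{T}$ has no $(1,1)$-part and $\widetilde{T}^{0,2}$ is $\g^{1,0}$-valued), so the frame is divergence-free for the Riemannian volume $dV$; integrating over the compact manifold then gives $\int_M\sum_l|Z_l c|^2\,dV=0$, forcing $Z_l c=0$. Together with $\bar Z_l c=0$ this yields $dc=0$, so the $c_{ij}^{\bar k}$ are constant and the real vector space $\g$ spanned by $Z_1,\bar Z_1,\dots,Z_n,\bar Z_n$ is a Lie algebra obeying \eqref{stru}.

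Two consequences follow. Feeding \eqref{stru} back into the Jacobi identity shows, by a short computation using $[\g^{1,0},\g^{1,0}]\subseteq\g^{0,1}$, $[\g^{0,1},\g^{0,1}]\subseteq\g^{1,0}$ and $[\g^{1,0},\g^{0,1}]=0$, that $[\g,[\g,\g]]=0$, so $\g$ is $2$-step nilpotent. Moreover the coframe now satisfies Maurer--Cartan equations with constant structure constants (indeed $d\theta^i$ is of pure type $(0,2)$), so by the classical theorem on complete parallelisms the universal cover of $M$ is the simply connected nilpotent Lie group $G$ with $\mathrm{Lie}(G)=\g$, and compactness identifies $M$ with $\Gamma\backslash G$ for a cocompact lattice $\Gamma$ (Mal'cev); the almost complex structure $J$ descends from the left-invariant one determined by $\g\otimes\C=\g^{1,0}\oplus\g^{0,1}$. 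This proves necessity.

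For the converse I would start from a $2$-step nilpotent nilmanifold $M=\Gamma\backslash G$ carrying a left-invariant $J$ satisfying \eqref{stru}, and equip it with the left-invariant metric making a fixed basis $Z_1,\dots,Z_n$ of $\g^{1,0}$ unitary. Computing $d\theta^i$ from \eqref{stru} shows that $d\theta^i$ is of type $(0,2)$, so $d\omega\in\Lambda^{3,0}\oplus\Lambda^{0,3}$ and $(g,J)$ is quasi-K\"ahler. Finally, using the explicit description of the Chern connection of a quasi-K\"ahler structure in terms of the Levi-Civita connection and $J$, together with the Koszul formula on $G$ and \eqref{stru}, one verifies that $\widetilde{\nabla}$ annihilates the left-invariant frame $\{Z_i,\bar Z_i\}$; hence $\widetilde{\nabla}$ is flat with trivial holonomy and the structure is Chern-flat. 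The main obstacle is the constancy step of the second paragraph, where the quasi-K\"ahler condition, the triviality of the holonomy and compactness must be combined through the Bochner argument; everything else is either formal or a direct Lie-algebraic verification.
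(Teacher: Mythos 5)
Your proposal is correct, and its skeleton is the paper's: a $\widetilde{\nabla}$-parallel unitary frame from trivial holonomy; $[Z_i,Z_{\ov{j}}]=0$ from the vanishing $(1,1)$-torsion and $[Z_i,Z_j]\in\Gamma(T^{0,1}M)$ from quasi-K\"ahlerness; the Jacobi identity yielding $Z_{\ov{k}}(c_{ij}^{\ov{r}})=0$ and $2$-step nilpotency; constancy of the structure functions via compactness; the nilmanifold structure via a global parallelism theorem (the paper cites Palais; your ``complete parallelisms plus Mal'cev'' is the same content, with Mal'cev superfluous since cocompactness of $\Gamma$ is automatic from compactness of $M$); and a direct left-invariant verification for the converse. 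The one genuine difference is at the crux, the constancy step. The paper disposes of it in one line --- the $c_{ij}^{\ov{r}}$ are holomorphic, hence constant because $M$ is compact --- implicitly invoking the fact that a function annihilated by all $(0,1)$-fields on a compact almost complex manifold is constant; this is true but not trivial in the non-integrable setting, where there are no local holomorphic coordinates and the classical maximum principle does not apply verbatim. You instead prove exactly this statement by hand with a Bochner-type argument: $Z_{\ov{l}}Z_l(|c|^2)=|Z_lc|^2$, using $[Z_l,Z_{\ov{l}}]=0$ and $Z_{\ov{l}}c=0$, combined with the observation that each $Z_{\ov{l}}$ is divergence-free for the Riemannian volume (because $[Z_{\ov{l}},Z_i]=0$ and $[Z_{\ov{l}},Z_{\ov{i}}]$ has no $(0,1)$-component), so that integration over the compact manifold gives $\int_M\sum_l|Z_lc|^2\,dV=0$ and hence $Z_lc\equiv0$, whence $dc=0$. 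Your route is slightly longer but self-contained, and it makes explicit precisely where compactness and the bracket relations enter; the paper's is shorter but rests on the quoted nontrivial fact about pseudoholomorphic functions.
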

The proof of Theorem \ref{main} relies upon a theorem of Palais, which can be found in \cite{palais}. In view of Theorem \ref{main},
the problem of classifying
quasi-K\"ahler Chern-flat manifolds reduce to that of classifying $2$-step nilpotent Lie algebras equipped with an almost complex
structure satisfying \eqref{stru}. It is rather natural to call such Lie algebras \emph{quasi-K\"ahler Chern flat}. In section \eqref{examples}
we show that any $n$-dimensional $2$-step nilpotent Lie algebra has a naturally corresponding $2n$-dimensional quasi-K\"ahler Chern flat Lie
algebra. For instance, the Lie algebra associated to the Iwasawa manifold equipped with the almost complex structure $J_3$ is
the quasi-K\"ahler Chern flat Lie algebra associated to the $3$-dimensional Heisenberg group.
In section \eqref{4.2} we classify the $8$-dimensional quasi-K\"ahler Chern Lie algebras
(the $6$-dimensional case was recently carried out
by the authors in \cite{TV}) and in section \eqref{4.3} we describe all the quasi-K\"ahler Chern flat Lie algebras which have
center of complex dimension equal to $1$. At the end of section \ref{QKCFLA}, we
determine the vector space of the infinitesimal deformations of quasi-K\"ahler Chern flat Lie algebras (see section \ref{deformations}).\\

In the last part of the paper we study the problem of taming an almost complex structure admitting a compatible quasi-K\"ahler Chern-flat metric
with a symplectic form. This problem arises from a conjecture of Donaldson and from the work of Tosatti,
Weinkove and Yau (see \cite{donaldson,yau}). We prove the following
\begin{theorem}\label{tosatti}
Let $(M,J)$ a compact quasi-K\"ahler Chern-flat manifold. Assume that there exists a symplectic structure $\omega$  on $M$ taming $J$.
Then $(M,J)$ is a complex torus.
\end{theorem}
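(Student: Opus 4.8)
The plan is to push the whole problem down to the Lie algebra $\g$ and then show that the taming hypothesis forces the Nijenhuis tensor of $J$ to vanish. First I would invoke Theorem~\ref{main} to write $M=\Gamma\backslash G$ with $G$ a $2$-step nilpotent (hence unimodular) Lie group, $\g$ its Lie algebra, and $J$ left-invariant satisfying \eqref{stru}. Since $G$ is unimodular, $M$ carries a $G$-invariant probability measure and one has the standard symmetrization chain map $\mathcal S\colon\Omega^*(M)\to\Lambda^*\g^*$ (average a form over $M$ and evaluate on left-invariant fields): it commutes with $d$, is the identity on invariant forms, preserves the bidegree determined by the invariant $J$, and sends pointwise-positive $(1,1)$-forms to positive $(1,1)$-forms. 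Applying $\mathcal S$ to the taming form $\omega$ produces a left-invariant closed $2$-form which still tames $J$ (because $\omega(X,JX)>0$ pointwise and $J$ is invariant), so I may assume from the outset that $\omega\in\Lambda^2\g^*$. Because $\omega(X,JX)$ only sees the $J$-invariant part, taming is equivalent to the $(1,1)$-component $\omega^{1,1}=i\sum h_{i\bar j}\,\zeta^i\wedge\bar\zeta^j$ being positive, i.e. to $(h_{i\bar j})$ being positive-definite Hermitian.

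Next I would set up the structure equations in a coframe adapted to $h$. Choosing a basis $\{Z_i\}$ of $\g^{1,0}$ that is unitary for $h$ (a mere change of basis of $\g^{1,0}$, which leaves \eqref{stru} intact) with dual $(1,0)$-coframe $\{\zeta^i\}$, and writing $[Z_p,Z_q]=\sum_k C^k_{pq}\,\bar Z_k$, condition \eqref{stru} says these are, up to conjugation, the only nonzero brackets, and $C$ is exactly the Nijenhuis tensor, so $J$ is integrable precisely when $C=0$. A direct computation gives $d\bar\zeta^k=-\sum_{p<q}C^k_{pq}\,\zeta^p\wedge\zeta^q$ (of type $(2,0)$) and its conjugate (of type $(0,2)$). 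Decomposing $d\omega=0$ into bidegrees, the $(3,0)$-part is
\[
\sum_i \zeta^i\wedge d\bar\zeta^i=0,
\]
which, reading off coefficients, is the cyclic (first Bianchi-type) identity $C^i_{pq}+C^p_{qi}+C^q_{ip}=0$ for all $i,p,q$. Note that only $\omega^{1,1}$ enters here; the component $\omega^{2,0}$ plays no role, so the other type-components of $d\omega=0$ are never needed.

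The heart of the argument, and the step I expect to be the main obstacle, is that this cyclic identity alone is too weak: it only annihilates the totally skew part of $C$, and examples show there are non-abelian $\g$ with \eqref{stru} and a vanishing skew part. The resolution is to bring in $2$-step nilpotency, which has not yet been used. The condition $[\g,[\g,\g]]=0$ reduces, via \eqref{stru}, to $[\g^{0,1},[\g^{1,0},\g^{1,0}]]=0$, i.e. to the quadratic identity $\sum_j C^j_{pq}\,\overline{C^k_{rj}}=0$. I would then form $S:=\sum_{i,p,q}|C^i_{pq}|^2\ge 0$; substituting the cyclic identity into one factor and relabeling yields $S=-2\operatorname{Re}P$ with $P:=\sum_{a,b,c}C^a_{bc}\,\overline{C^c_{ab}}$, and antisymmetry in the lower pair rewrites the inner sum of $P$ as $-\sum_a C^a_{bc}\,\overline{C^c_{ba}}$, which vanishes by the quadratic identity. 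Hence $P=0$, so $S=0$ and therefore $C=0$.

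Finally, with $C=0$, \eqref{stru} together with conjugation forces $[\g^{1,0},\g^{1,0}]=[\g^{0,1},\g^{0,1}]=[\g^{1,0},\g^{0,1}]=0$, so $\g$ is abelian; thus $M$ is a torus and $J$, being left-invariant with vanishing Nijenhuis tensor, is integrable, exhibiting $(M,J)$ as a complex torus. In summary, the delicate point is not the reduction to $\g$ but the exact algebraic interlock between the cyclic identity coming from $d\omega=0$ and the quadratic identity coming from $2$-step nilpotency, which together force $\sum|C|^2=0$.
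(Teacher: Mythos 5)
Your proposal is correct, but it takes a genuinely different route from the paper's. The paper does not symmetrize: it keeps the taming form with its non-constant coefficients, writes $\omega=\omega_g+\beta+\ov{\beta}$ with $\beta\in\Omega^{2,0}$, and rests on Lemma \ref{lemmatosatti}, whose proof expands $\beta$ in the invariant coframe and uses compactness of $M$ (the coefficient functions satisfy $\ov{\partial}$-type equations, hence are constant) to conclude that $\beta$ is invariant and therefore closed; then $A\omega_g=-\partial\beta=0$ gives ${\rm d}\omega_g=0$, i.e.\ the metric is almost K\"ahler, and the proof concludes by quoting the result of [TV] that almost K\"ahler together with Chern-flat implies integrable. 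You instead reduce to invariant data at the outset by Belgun-type symmetrization (legitimate here: $G$ is nilpotent, hence unimodular, and the averaging map commutes with ${\rm d}$, preserves type and preserves taming), after which everything is finite-dimensional linear algebra on $\mathfrak{g}$: the $(3,0)$-part of ${\rm d}\omega=0$ yields the cyclic identity $C^i_{pq}+C^p_{qi}+C^q_{ip}=0$, the Jacobi identity together with \eqref{stru} yields the quadratic identity $\sum_a C^a_{bc}\,\ov{C^j_{ar}}=0$, and your contraction argument $S=-2\,\mathrm{Re}\,P$, $P=0$, is correct --- indeed $P=\sum_{a,b,c}C^a_{bc}\,\ov{C^c_{ab}}$ vanishes term by term in $a$ (take $j=c$, $r=b$ in the quadratic identity), so your extra antisymmetry manipulation is not even needed. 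What each approach buys: yours is self-contained, avoiding both the integrability theorem of [TV] and all function theory, at the price of invoking the symmetrization lemma; it also sidesteps a point the paper leaves implicit, namely why the $(1,1)$-part of an arbitrary taming form may be identified with the K\"ahler form of a \emph{quasi-K\"ahler} metric (after averaging this is automatic, since $\partial$ and $\ov{\partial}$ annihilate invariant forms, cf.\ Proposition \ref{equivaChernAlgebra}). The paper's route, in turn, yields the stronger standalone Lemma \ref{lemmatosatti}, valid for non-invariant $(2,0)$-forms, which is of independent interest. One aside in your write-up is inaccurate, though it plays no logical role: there are no ``non-abelian $\mathfrak{g}$ satisfying \eqref{stru} with vanishing skew part of $C$'' --- your own argument shows such algebras cannot exist, since \eqref{stru} plus Jacobi already forces the quadratic identity; the correct motivation is only that the cyclic identity does not kill $C$ \emph{as a tensor}, i.e.\ before the Jacobi identity is imposed.
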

\noindent This theorem was recently proved by the authors in the case of the Iwasawa manifold in \cite{TV}.

\bigskip
\noindent {\sc Acknowledgments:} The authors would like to thank Simon Salamon for useful conversations, suggestions and remarks.

\bigskip
\noindent {\sc Notations.}
When a coframe $\{\alpha_1,\dots,\alpha_n\}$ is given we
will denote the $r$-form $\alpha_{i_{1}}\wedge\dots\wedge\alpha_{i_{r}}$ by
$\alpha_{i_{1}\dots i_{r}}$ and
in the indicial expressions the summation sign over repeated indeces is
omitted.
\section{Preliminaries and first results}
Let $(M,J)$ be an almost complex manifold. Then the complexified tangent bundle $T\otimes \C$ of $M$ splits in $T\otimes \C=T^{1,0}\oplus T^{0,1}$.
Consequently the vector space $\Omega^p$ of the complex $p$-forms on $M$ splits as $\Omega^p=\oplus_{r+s=p}\Omega^{r,s}$ and the de Rham differential
operator ${\rm d}$ decomposes accordingly with this type decomposition of complex $p$-form as ${\rm d}=A+\partial+\ov{\partial}+\ov{A}$.
Let us fix now an almost Hermitian metric $g$ on $(M,J)$. It's well known that there exists a unique connection $\widetilde{\nabla}$ on $M$
satisfying the following properties
$$
\widetilde{\nabla} g=0\,,\quad \widetilde{\nabla}J=0\,,\quad {\rm Tor}(\wn)^{1,1}=0\,,
$$
where ${\rm Tor}(\wn)^{1,1}$ denotes the $(1,1)$-part of the torsion of $\widetilde{\nabla}$.
The connection $\wn$ is usually called the \emph{canonical Hermitian connection} or the \emph{Chern connection} of $g$.
The aim of this paper is to study almost complex manifolds admitting an almost Hermitian metric having the
holonomy of the Chern connection trivial. We will adopt the following conventionally definitions:
\begin{definition}
An almost Hermitian metric having trivial Chern holonomy will be called in this paper a \emph{Chern-flat metric}.

We will refer to a \emph{Chern-flat manifold} as to an almost complex manifold admitting an almost Hermitian metric with trivial Chern holonomy.
\end{definition}
As first result on Chern-flat manifolds we have the following
\begin{prop}\label{campiglobali}
An almost complex manifold $(M,J)$ is Chern-flat if and only
if there exists a global $(1,0)$-frame $\{Z_1,\dots,Z_n\}$ on $M$ such that
$[Z_i,Z_{\ov{j}}]=0$, for every $i,j=1,\dots,n$.
\end{prop}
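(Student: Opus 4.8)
The plan is to use the standard fact that a connection on a connected manifold has trivial holonomy if and only if it admits a global parallel frame, and to combine this with the two pieces of structure that single out the Chern connection: the compatibility $\wn J=0$ and the vanishing of the $(1,1)$-part of its torsion. Throughout I assume $M$ connected, arguing on each component otherwise.

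For the direct implication, suppose $(M,J)$ is Chern-flat, so the Chern connection $\wn$ of some almost Hermitian metric $g$ has trivial holonomy. Because $\wn J=0$, the bundle $T^{1,0}$ is $\wn$-parallel; fixing a point $p\in M$ and a basis of $T^{1,0}_p$ and transporting it (path-independently, by triviality of the holonomy) produces a global $(1,0)$-frame $\{Z_1,\dots,Z_n\}$ with $\wn Z_i=0$. Since $\wn$ is a real connection, $\wn Z_{\ov j}=\ov{\wn Z_j}=0$ as well. I would then read off the bracket relation from the torsion: the condition ${\rm Tor}(\wn)^{1,1}=0$ means exactly that the torsion vanishes when evaluated on a $(1,0)$-vector and a $(0,1)$-vector, so that
\[
0={\rm Tor}(\wn)(Z_i,Z_{\ov j})=\wn_{Z_i}Z_{\ov j}-\wn_{Z_{\ov j}}Z_i-[Z_i,Z_{\ov j}]=-[Z_i,Z_{\ov j}],
\]
which gives $[Z_i,Z_{\ov j}]=0$ for all $i,j$.

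For the converse, suppose such a frame $\{Z_1,\dots,Z_n\}$ is given. I would build the metric and connection by hand: declare $\{Z_1,\dots,Z_n\}$ to be a unitary frame, i.e. define the almost Hermitian metric $g$ by $g(Z_i,Z_{\ov j})=\delta_{ij}$, and define a connection $\nabla$ on $TM$ by prescribing $\nabla Z_i=0$ (equivalently $\nabla\re Z_i=\nabla\im Z_i=0$), which forces $\nabla Z_{\ov j}=0$. Then $\nabla g=0$ because the metric has constant components in the parallel frame; $\nabla J=0$ because $\nabla$ preserves $T^{1,0}=\span\{Z_i\}$, on which $J$ acts as multiplication by $i$; and the $(1,1)$-part of its torsion vanishes, since ${\rm Tor}(\nabla)(Z_i,Z_{\ov j})=-[Z_i,Z_{\ov j}]=0$ by hypothesis. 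By the uniqueness of the Chern connection recalled above, $\nabla$ must coincide with the Chern connection $\wn$ of $g$. As $\{Z_i\}$ is by construction a global $\nabla$-parallel frame, the holonomy of $\wn=\nabla$ is trivial, so $g$ is Chern-flat.

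The computations are routine; the only genuine points requiring care are the two conceptual translations. First, in the direct implication one must use $\wn J=0$ to guarantee that the globally defined parallel frame can be chosen of type $(1,0)$ (this is where connectedness is used). Second, one must correctly identify the hypothesis ${\rm Tor}(\wn)^{1,1}=0$ with the vanishing of the torsion on mixed pairs $(Z_i,Z_{\ov j})$, which is precisely what converts the torsion identity into the bracket relation. The verification in the converse that the hand-built $\nabla$ satisfies all three defining properties of $\wn$, so that uniqueness applies, is the step I expect to need the most bookkeeping, though each individual check is short.
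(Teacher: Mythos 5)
Your proof is correct, and while your forward direction is essentially the paper's, your converse takes a genuinely different (dual) route. Forward: both you and the paper obtain a global parallel $(1,0)$-frame from the trivial holonomy (the paper takes a unitary one, which is not needed) and then evaluate the torsion identity on the mixed pair $(Z_i,Z_{\ov{j}})$, using ${\rm Tor}(\wn)^{1,1}=0$ to get $[Z_i,Z_{\ov{j}}]=0$. Converse: you build a connection $\nabla$ by declaring the frame parallel, check the three axioms $\nabla g=0$, $\nabla J=0$, ${\rm Tor}(\nabla)^{1,1}=0$, and invoke \emph{uniqueness} of the Chern connection to conclude $\nabla=\wn$, so the holonomy is trivial by construction. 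The paper instead takes the Chern connection $\wn$ of $g=\sum_i\zeta_i\odot\zeta_{\ov{i}}$ as given by the \emph{existence} theorem and proves the frame is $\wn$-parallel: from ${\rm Tor}(\wn)^{1,1}=0$ and the bracket hypothesis it gets $\wn_iZ_{\ov{j}}=\wn_{\ov{j}}Z_i$, whose two sides have types $(0,1)$ and $(1,0)$ respectively (this is where $\wn J=0$ enters), hence both vanish; then metric compatibility, $g(\wn_iZ_j,Z_{\ov{k}})=-g(Z_j,\wn_iZ_{\ov{k}})=0$, kills the remaining derivatives. What each buys: your version makes flatness manifest and avoids the type-decomposition and metric-compatibility computations, at the price of verifying that the hand-built $\nabla$ is a well-defined real connection satisfying all three axioms (your remark that $\nabla\re Z_i=\nabla\im Z_i=0$ handles reality correctly); the paper's version avoids constructing any connection but must extract parallelism from the axioms. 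Your explicit attention to connectedness and to why the parallel frame can be chosen of type $(1,0)$ fills in details the paper leaves implicit.
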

\begin{proof}
Assume that there exists a $J$-compatible almost Hermitian metric $g$ having the holonomy of the Chern connection trivial.
Then there exists a global unitary frame $\{Z_1,\dots,Z_n\}$ on $M$ such that
$$
\widetilde{\nabla}_iZ_{j}=\widetilde{\nabla}_iZ_{\ov{j}}=0\,,\quad i,j=1,\dots,n\,.
$$
Since ${\rm Tor}^{1,1}(\widetilde{\nabla})=0$, then we get
$$
0=\widetilde{\nabla}_iZ_{\ov{j}}-\widetilde{\nabla}_{\ov{j}}Z_{i}-[Z_i,Z_{\ov{j}}]=[Z_i,Z_{\ov{j}}]\,,\quad i,j=1,\dots,n\,.
$$

On the other hand assume that there exists a global $(1,0)$-frame $\{Z_1,\dots,Z_n\}$ such that
$[Z_i,Z_{\ov{j}}]=0$ and let $g$ the metric
$$
g=\sum_{i=1}^n\zeta_{i}\odot\zeta_{\ov{i}}\,,
$$being $\{\zeta_1,\dots,\zeta_n\}$ the dual frame of $\{Z_1,\dots,Z_n\}$. Then we have
$$
0=\widetilde{\nabla}_iZ_{\ov{j}}-\widetilde{\nabla}_{\ov{j}}Z_{i}-[Z_i,Z_{\ov{j}}]=\widetilde{\nabla}_iZ_{\ov{j}}-\widetilde{\nabla}_{\ov{j}}Z_{i}\,,\quad i,j=1,\dots,n\,,
$$
which implies $\widetilde{\nabla}_iZ_{\ov{j}}=0$ since $\widetilde{\nabla}$ preserves $J$. Furthermore, since the frame $\{Z_1,\dots,Z_n\}$ is unitary with respect to $g$, we get
$$
g(\widetilde{\nabla}_iZ_{j},Z_{\ov{k}})=Z_ig(Z_{j},Z_{\ov{k}})-g(Z_{j},\widetilde{\nabla}_iZ_{\ov{k}})=0\,,
$$
which implies $\widetilde{\nabla}_iZ_{j} = 0$, for $i,j=1,\dots,n$. Hence there exists a global frame of type
$(1,0)$ on $M$ which is preserved by $\widetilde{\nabla}$ and
this is equivalent to require ${\rm Hol}(\widetilde{\nabla})=0$.
\end{proof}
\subsection{Left-invariant Chern-flat structures on Lie groups}
Let $G$ be a Lie group equipped with a left-invariant almost complex structure $J$. Then $J$ induces an almost complex structure,
which we denote with the same letter, on the
Lie algebra $\g$ associated to $G$. By \emph{almost complex structure on a Lie algebra $\g$} we just mean an endomorphism $J$ of $\g$ satisfying
$J^{2}=-{\rm I}$. Such an endomorphism has no any relation with the bracket of $\g$. Anyway $J$ allows us to split the complexified of $\g$ in
$\g\otimes\C=\g^{1,0}\oplus\g^{0,1}$. The space $\g^{1,0}$ can be clearly  identified with the space of left-invariant vector fields of type $(1,0)$
on the Lie group $(G,J)$.
In view of Proposition \ref{campiglobali}, the existence of left-invariant almost Hermitian Chern-flat metrics on $(G,J)$
can be characterized in terms of $(\g,J)$.
\begin{prop}\label{proppreliminare}
Let $(G,J)$ be a Lie group equipped with a left-invariant almost complex structure. If the almost complex Lie algebra $(\g,J)$ associated to
$(G,J)$ satisfies
$$
[\g^{1,0},\g^{0,1}]=0\,,
$$
then every left-invariant almost Hermitian metric $g$ on $(G,J)$ is Chern-flat.
\end{prop}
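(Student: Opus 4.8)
The plan is to reduce the statement to the characterization of Chern-flat manifolds in Proposition \ref{campiglobali}, but applied to the \emph{given} metric $g$ rather than to a metric of our own manufacture. Fix an arbitrary left-invariant almost Hermitian metric $g$ on $(G,J)$. Since $g$ and $J$ are both left-invariant, $g$ induces a Hermitian inner product $h(X,Y)=g(X,\ov{Y})$ on $\g^{1,0}$. Applying the Gram--Schmidt process to $h$ at the identity element and extending the resulting basis by left-translation, we obtain a left-invariant $(1,0)$-frame $\{W_1,\dots,W_n\}$ which is unitary for $g$ at every point, i.e. $g(W_i,W_{\ov{j}})=\delta_{ij}$.

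The key observation is that the hypothesis $[\g^{1,0},\g^{0,1}]=0$ is a statement about the whole eigenspaces, not about one distinguished frame. Each $W_i$ lies in $\g^{1,0}$ and each $W_{\ov{j}}$ lies in $\g^{0,1}$, and because these are left-invariant vector fields their Lie bracket coincides with the bracket of $\g\otimes\C$; hence $[W_i,W_{\ov{j}}]\in[\g^{1,0},\g^{0,1}]=0$ for all $i,j$. Thus the $g$-unitary frame we have just built automatically satisfies the bracket condition required by Proposition \ref{campiglobali}.

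It then remains to repeat the computation of the \emph{if} direction of the proof of Proposition \ref{campiglobali} with $\{W_i\}$ in place of $\{Z_i\}$. From ${\rm Tor}(\wn)^{1,1}=0$ and $[W_i,W_{\ov{j}}]=0$ one gets $\wn_iW_{\ov{j}}-\wn_{\ov{j}}W_i=0$; since $\wn$ preserves $J$ the two terms are respectively of type $(0,1)$ and $(1,0)$, so each must vanish, giving $\wn_iW_{\ov{j}}=0$. Unitarity of $\{W_i\}$ for $g$ then yields $g(\wn_iW_j,W_{\ov{k}})=W_ig(W_j,W_{\ov{k}})-g(W_j,\wn_iW_{\ov{k}})=0$, whence $\wn_iW_j=0$; conjugating produces $\wn_{\ov{i}}W_{\ov{j}}=0$ and $\wn_{\ov{i}}W_j=0$. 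Therefore $\{W_1,\dots,W_n\}$ is a global $\wn$-parallel frame, and so $\Hol(\wn)$ is trivial, i.e. $g$ is Chern-flat.

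The only point that needs care—and the reason one cannot simply quote Proposition \ref{campiglobali}—is the quantifier over metrics: that proposition guarantees only that \emph{some} Chern-flat metric exists, whereas here we must show that \emph{every} left-invariant $g$ is Chern-flat. This is precisely why the argument is arranged so as to produce, for the arbitrary given $g$, a $g$-unitary left-invariant frame satisfying $[W_i,W_{\ov{j}}]=0$; such a frame exists because the hypothesis constrains all of $\g^{1,0}\times\g^{0,1}$ and not merely one chosen frame, and the rest of the computation is then routine.
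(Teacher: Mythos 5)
Your proof is correct and follows essentially the same route as the paper: the paper's own argument also fixes the arbitrary left-invariant metric $g$, picks a left-invariant unitary $(1,0)$-frame (whose brackets vanish by the hypothesis on $[\g^{1,0},\g^{0,1}]$), and then runs the torsion-plus-metric-compatibility computation from Proposition \ref{campiglobali} to conclude the frame is $\wn$-parallel. You have merely spelled out details the paper leaves implicit (Gram--Schmidt at the identity, the type argument, and the conjugated equations), so there is nothing to correct.
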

\begin{proof}
Let $g$ be an arbitrary left-invariant almost Hermitian metric on $(G,J)$ and let $\widetilde{\nabla}$ be the Chern
connection of $g$. Since $g$ is left-invariant, then we can find a
left-invariant unitary frame $\{Z_1,\dots,Z_n\}$
on $(G,J)$. Again using ${\rm Tor}(\widetilde{\nabla})=0$ and $\widetilde{\nabla}g=0$ we get that
$\widetilde{\nabla}_iZ_j=\widetilde{\nabla}_iZ_{\ov{j}}$
for every $i,j=1,\dots,n$. Hence ${\rm Hol}(\widetilde{\nabla})=0$.
\end{proof}
Proposition \ref{proppreliminare} justifies the following
\begin{definition}
An almost complex Lie algebra $(\g,J)$ is called \emph{Chern-flat} if it satisfies $[\g^{1,0},\g^{0,1}]=0$.
\end{definition}

Condition to be \emph{Chern-flat} can be also characterized as follows:
\begin{prop}\label{real}
An almost complex Lie algebra $(\g,J)$ is Chern-flat if and only if $J$ satisfies $[JX,Y]=[X,JY]$ for every $X,Y\in\g$.
\end{prop}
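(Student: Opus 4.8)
The plan is to translate the defining condition $[\g^{1,0},\g^{0,1}]=0$ into a purely real statement by decomposing real vectors into their $(1,0)$ and $(0,1)$ components. For $X\in\g$ I would write $X^{1,0}=\frac{1}{2}(X-iJX)$ and $X^{0,1}=\frac{1}{2}(X+iJX)$, which are precisely the projections of $X$ onto the $\pm i$-eigenspaces $\g^{1,0}$ and $\g^{0,1}$ of $J$. Since the bracket on $\g\otimes\C$ is $\C$-bilinear and $\g^{1,0}$, $\g^{0,1}$ are spanned over $\C$ by the vectors $X^{1,0}$ and $Y^{0,1}$ as $X,Y$ range over $\g$, the Chern-flat condition $[\g^{1,0},\g^{0,1}]=0$ is equivalent to requiring $[X^{1,0},Y^{0,1}]=0$ for all real $X,Y\in\g$.

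Next I would expand this bracket using bilinearity and $J^2=-{\rm I}$, obtaining
\[
[X^{1,0},Y^{0,1}]=\frac{1}{4}\big([X,Y]+[JX,JY]\big)+\frac{i}{4}\big([X,JY]-[JX,Y]\big),
\]
and then separate the real and imaginary parts. Because the two summands are respectively the real and purely imaginary parts, the vanishing $[X^{1,0},Y^{0,1}]=0$ for all $X,Y$ is equivalent to the simultaneous validity, for all $X,Y\in\g$, of the two real identities $[X,Y]+[JX,JY]=0$ and $[JX,Y]=[X,JY]$.

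The main point of the argument, and the step I expect to matter most, is that these two identities are not independent: the single relation $[JX,Y]=[X,JY]$ already forces the other. Indeed, replacing $X$ by $JX$ in $[JX,Y]=[X,JY]$ and using $J^2=-{\rm I}$ gives $[-X,Y]=[JX,JY]$, i.e. $[X,Y]+[JX,JY]=0$. Thus the symmetry condition alone recovers the full vanishing of the complexified bracket.

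Putting this together yields both implications. For the forward direction, Chern-flatness gives $[X^{1,0},Y^{0,1}]=0$, whose imaginary part is exactly $[JX,Y]=[X,JY]$. For the converse, assuming $[JX,Y]=[X,JY]$ for all $X,Y$, the imaginary part of the displayed bracket vanishes by hypothesis while the real part vanishes by the substitution observation above; hence $[X^{1,0},Y^{0,1}]=0$ and $(\g,J)$ is Chern-flat. I anticipate no serious obstacle beyond keeping track of the factors of $i$ and correctly identifying which real identity comes from which part of the bracket.
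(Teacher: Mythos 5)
Your proof is correct. The paper actually states Proposition \ref{real} without any proof (it is treated as an immediate consequence of the definition), so there is nothing to compare against; your computation of
\[
[X^{1,0},Y^{0,1}]=\tfrac{1}{4}\bigl([X,Y]+[JX,JY]\bigr)+\tfrac{i}{4}\bigl([X,JY]-[JX,Y]\bigr)
\]
is exactly the verification the authors left to the reader, and you correctly identified the one point that needs an argument: the substitution $X\mapsto JX$ in $[JX,Y]=[X,JY]$ recovers $[X,Y]+[JX,JY]=0$, so the single symmetry identity suffices and the equivalence is not merely with the conjunction of the two real identities.
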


Proposition \ref{real} as the following immediate consequence
\begin{cor}
Let $(\g,J)$ be a Chern-flat Lie algebra, then the center of $\g$ is $J$-invariant.
\end{cor}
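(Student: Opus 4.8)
The plan is to deduce the corollary directly from the characterization of Chern-flatness provided by Proposition \ref{real}, namely the identity $[JX,Y]=[X,JY]$ valid for all $X,Y\in\g$. Recall that an element $X\in\g$ lies in the center precisely when $[X,Y]=0$ for every $Y\in\g$, and that $J$-invariance of the center means that $JX$ is central whenever $X$ is. So it suffices to show that centrality of $X$ forces centrality of $JX$.

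First I would fix a central element $X$ together with an arbitrary $Y\in\g$ and evaluate $[JX,Y]$. Invoking Proposition \ref{real}, I rewrite $[JX,Y]=[X,JY]$. Since $J$ maps $\g$ into itself, the vector $JY$ is again an element of $\g$, and because $X$ is central the bracket $[X,JY]$ vanishes. Therefore $[JX,Y]=0$ for every $Y\in\g$, which is exactly the assertion that $JX$ belongs to the center. Running this over all central $X$ yields the $J$-invariance of the center.

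There is essentially no obstacle to overcome: the whole content of the corollary is packaged inside Proposition \ref{real}. The single observation that makes it work is that the identity $[JX,Y]=[X,JY]$ transfers the defining condition of centrality across $J$, using only that $J$ preserves $\g$ and that the center is cut out by the vanishing of all brackets in one fixed argument.
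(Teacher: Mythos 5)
Your proof is correct and is exactly the paper's argument: take a central $X$, apply Proposition \ref{real} to get $[JX,Y]=[X,JY]=0$ for all $Y\in\g$, and conclude that $JX$ is central. No differences worth noting.
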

\begin{proof}
Let $X$ be an arbitrary vector belonging to the center of $\g$ and let $Y\in\g$. Then we have $[JX,Y]=[X,JY]=0$. Hence $JX$ belongs to the
center of $\g$.
\end{proof}
\section{Quasi-K\"ahler Chern-flat manifolds \\
and proof of Theorem \ref{main}}\label{Iwasawa}
In this section we introduce quasi-K\"ahler Chern-flat manifolds and we prove Theorem \ref{main}.\\

We recall that an almost Hermitian metric $g$ on an almost complex manifold  $(M,J)$ is called \emph{quasi-K\"ahler} if the
K\"ahler $2$-form $\omega_g(\cdot,\cdot):=g(J\cdot,\cdot)$ associated to $(g,J)$ is $\ov{\partial}$-closed.
In this case the Chern connection of $g$ is simply the connection described by the formula
$$
\widetilde{\nabla}=\nabla-\frac12 J\nabla J\,,
$$
where $\nabla$ is the Levi-Civita connection associated to $g$ (see for instance \cite{gau}).

\begin{definition}
In this paper we refer to a \emph{quasi-K\"ahler Chern-flat manifold} as to an almost complex manifold admitting a
compatible almost Hermitian metric which is
quasi-K\"ahler and Chern-flat.
\end{definition}

The canonical example of a quasi-K\"ahler Chern-flat manifold is the Iwasawa manifold equipped with the almost
complex structure $J_3$ defined in \cite[p. 151]{elsa}.
This almost complex manifold is defined as follows:\\
\noindent Let $G$ be the $3$-dimensional \emph{complex Heisenberg group}
$$
G:=\left\{
\left(
\begin{array}{cccccc}
1  &z_1    &z_2  \\
0  &1      &z_3     \\
0  &0      &1\\
\end{array}
\right)\,:z_i\in\C\,,\operatorname{i}=1,2,3
\right\}
$$
and let $M$ be the compact manifold $M=G/\Gamma$, where $\Gamma$ is the co-compact lattice of $G$ formed by the matrices with integral entries.
Then $M$ is a $2$-step nilpotent nilmanifold  usually called the \emph{Iwasawa manifold}.
This manifold admits a global frame
$\{X_1,X_2,X_3,X_4,X_5,X_6\}$ satisfying the following structure equations
$$
\begin{aligned}
&[X_1,X_2]=X_3\,,\quad [X_4,X_5]=-X_3
&[X_2,X_4]=X_6\,,\quad [X_5,X_1]=X_6\,.
\end{aligned}
$$
The structure $J_3$ can be defined as the almost complex structure
$$
\begin{aligned}
&J_3X_1=X_4\,,\quad &&J_3X_2=X_5\,,\quad &&J_3X_3=X_6\,,\\
&J_3X_4=-X_1\,,     &&J_3X_5=-X_2\,,     &&J_3X_6=-X_3\,.
\end{aligned}
$$
Such an almost complex structure induces the $(1,0)$-frame
$$
Z_1=X_1-\operatorname{i}X_4\,,\quad Z_2=X_2-\operatorname{i}X_5\,,\quad Z_3=X_3-\operatorname{i}X_6\,.
$$
Clearly
$$
[Z_1,Z_2]=2\,Z_{\overline{3}}\,,\quad [Z_{\overline{1}},Z_{\overline{2}}]=2\,Z_{3}\,.
$$
Let $\{\zeta_1,\zeta_2,\zeta_3\}$ be the coframe associated to $\{Z_1,Z_2,Z_3\}$. Then it easy to show that the
canonical metric
$g=\zeta_{1}\odot\zeta_{\ov{1}}+\zeta_{2}\odot\zeta_{\ov{2}}+\zeta_{3}\odot\zeta_{\ov{3}}$ is quasi-K\"ahler and Chern-flat with respect to $J_3$.
\begin{rem}\emph{The notation used in this paper is slightly different from the one taken into account in \cite{elsa},
where the Lie algebra associated to the complex Heisenberg group is described using the following structure equations
$$
[e_1,e_3]=-e_5\,,\quad [e_2,e_4]=e_5\,,\quad [e_1,e_4]=-e_6\,,\quad [e_2,e_3]=-e_6\,.
$$
The following relations describe an explicit isomorphism between our frame and the one taken into account in \cite{elsa} $$
\begin{aligned}
&X_1 \leftrightarrow e_1\,, \quad X_4 \leftrightarrow -e_2 \,,\\
&X_2 \leftrightarrow e_4 \,,  \quad  X_5 \leftrightarrow e_3 \,,\\
&X_3 \leftrightarrow e_5\,,   \quad  X_6 \leftrightarrow e_6 \,.
\end{aligned}
$$
With respect to the frame $\{e_i\}$,
the fundamental 2-form $\omega_3$ associated to the pair $(g,J_3)$
is \[ \omega_3 = -e^{12} -e^{34} + e^{56} \, ,\]
where $e^{ij}$ means $e^i \wedge e^j$.
In view of \cite{elsa} the almost complex structure $J_3$ is the unique almost complex structure on the Iwasawa manifold
which is quasi-K\"ahler with respect to the metric $g_3$ and it is invariant under the
 natural action of $\T^3$ on the space of the $g_3$-compatible almost Hermitian structures on $M$.}
\end{rem}

\medskip
The Iwasawa manifold is an example of a compact $2$-step nilpotent nilmanifold equipped with a left-invariant almost complex structure which is
quasi-K\"ahler and Chern-flat. A \emph{$2$-step nilpotent nilmanifold} is by definition an homogeneous space which is obtained by a quotient of a
$2$-step nilpotent Lie group $G$ with a discrete subgroup $\Gamma$. Any left-invariant almost complex structure $J$ on a nilmanifold induces an
almost complex structure $J$ on the Lie algebra $\g$ associated to $G$.\\

Now we prove Theorem \ref{main}:

\begin{proof}[Proof of Theorem $\ref{main}$]
Let $(M,J)$ be a compact almost complex manifold and
assume that there exists a quasi-K\"ahler Chern-flat metric $g$ compatible with $J$ on $M$. Then we can find a global unitary frame on $M$
$\{Z_1,\dots,Z_n\}$ satisfying
$$
\widetilde{\nabla}_iZ_j=\widetilde{\nabla}_{\ov{i}}Z_j=0
$$
for every $i,j=1,\dots,n$, where $\widetilde{\nabla}$ denotes the Chern connection of $g$.
Such a condition reads in terms of brackets as
$$
[\widetilde{Z}_i,\widetilde{Z}_{\ov{j}}]=0\,,\quad [\widetilde{Z}_i,\widetilde{Z}_j]\in \Gamma(T^{0,1}M)\,.
$$
We can write $[Z_i,Z_j]=\sum_{r=1}^nc_{ij}^{\ov{r}}Z_{\ov{r}}$.
In order to show that $M$ has a natural
structure of Lie group, it is enough to check that the maps $c_{ij}^{\ov{r}}$'s are constant (see \cite{palais}). The Jacobi identity imply
$$
\begin{aligned}
0=&\mathfrak{S}[[Z_i,Z_j],Z_{\ov{k}}]=[[Z_i,Z_j],Z_{\ov{k}}]=[c_{ij}^{\ov{r}}Z_{\ov{r}},Z_{\ov{k}}]\\
=&c_{ij}^{\ov{r}}[Z_{\ov{r}},Z_{\ov{k}}]-
Z_{\ov{k}}(c_{ij}^{\ov{r}})Z_{\ov{r}}
=c_{ij}^{\ov{r}}c_{\ov{r}\ov{k}}^{l}Z_{l}-Z_{\ov{k}}(c_{ij}^{\ov{r}})Z_{\ov{r}}\,,\quad i,j,k=1,\dots,n\,,
\end{aligned}
$$
where the symbols $\mathfrak{S}$ denotes the cyclic sum.
Then \[ c_{ij}^{\ov{r}}c_{\ov{r}\ov{k}}^{l} = 0 \,,\quad  Z_{\ov{k}}(c_{ij}^{\ov{r}}) = 0 \, .\]

Thus, the functions $c_{ij}^{\ov{r}}$'s are holomorphic maps on $M$. Since $M$ is compact, the $c_{ij}^{\ov{r}}$'s are constant and, consequently,
$M$ inherits a natural structure of homogeneous space obtained as a quotient of a Lie group with a lattice.
Since the finite dimensional Lie algebra generated by the fields $\{Z_1,\dots,Z_n\}$ is $2$-step nilpotent we get that $M$ is indeed a $2$-step nilpotent nilmanifold.
The almost structure $J$ is clearly left-invariant on $M$.  Note that the almost complex Lie algebra associated to the universal cover of $M$ satisfies conditions \eqref{stru}.

On the other hand assume that $M=G/\Gamma$ is a $2$-step nilpotent nilmanifold equipped with a left-invariant almost complex structure $J$
such that the almost complex Lie algebra $(\g,J)$ associated to $(G,J)$ satisfies $\eqref{stru}$. Fix an arbitrary $(1,0)$-coframe
$\{\zeta_1,\dots,\zeta_n\}$ on $(\g,J)$ and let
$$
g:=\sum_{i=1}^n\zeta_i \odot \zeta_{\ov{i}}\,.
$$
Then $g$ induces an almost Hermitian metric on $(M,J)$. Direct computations give that the metric $g$ is quasi-K\"ahler and Chern-flat.
\end{proof}

\begin{rem}\emph{The condition ${\rm Hol}(\widetilde{\nabla})=0$ is quite strong. For instance we have the following results:}
\begin{itemize}
\item \emph{in almost K\"ahler manifolds and in nearly K\"ahler manifolds condition ${\rm Hol}(\widetilde{\nabla})=0$
implies the integrability (see \cite{TV, LV3})};
\vspace{0.1cm}
\item \emph{in dimension $4$ there are no strictly quasi-K\"ahler Chern-flat metrics (see \cite{TV}).}
\end{itemize}
\end{rem}

\section{Quasi-K\"ahler Chern-flat Lie algebras}\label{QKCFLA}
In view of Theorem \ref{main},
the problem of classify quasi-K\"ahler Chern-flat manifolds turns to the one of
classify almost complex Lie algebras satisfying equations \eqref{stru}. It is quite natural to introduce the following definition:
\begin{definition} An almost complex Lie algebra $(\g,J)$ satisfying equations \eqref{stru}
is said to be \emph{quasi-K\"ahler and Chern-flat}.
\end{definition}
The following easy-prove Proposition will be useful
\begin{prop} \label{equivaChernAlgebra}
Let $(\g,J)$ be an almost complex Lie algebra. The following facts are equivalent:
\begin{enumerate}
\item[1.] $(\g,J)$ is quasi-K\"ahler and  Chern-flat;
\vspace{0.1cm}
\item[2.] $\partial\Lambda^{1,0}\g^*=\ov{\partial}\Lambda^{1,0}\g^*=0$;
\vspace{0.1cm}
\item[3.] the almost complex structure $J$ satisfies \[ J[X,Y]=-[JX,Y]=-[X,JY] \]
for every $X,Y\in\g$.
\end{enumerate}
\end{prop}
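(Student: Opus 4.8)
The plan is to establish the equivalence of the three conditions by proving a cycle of implications, working with the almost complex structure $J$ and the decomposition $\g\otimes\C=\g^{1,0}\oplus\g^{0,1}$. Recall that $\g^{1,0}$ and $\g^{0,1}$ are the $+\operatorname{i}$ and $-\operatorname{i}$ eigenspaces of $J$ respectively, so for $W\in\g^{1,0}$ we have $JW=\operatorname{i}W$ and for $W\in\g^{0,1}$ we have $JW=-\operatorname{i}W$.

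First I would prove the equivalence of (1) and (3). Condition (3) is a statement about real vectors $X,Y\in\g$, but since all the brackets and $J$ extend $\C$-linearly to $\g\otimes\C$, the identities $J[X,Y]=-[JX,Y]=-[X,JY]$ hold for all real $X,Y$ if and only if they hold for all complexified vectors. Thus I would test (3) on eigenvectors of $J$. If $W,W'\in\g^{1,0}$, then $JW=\operatorname{i}W$, so the equality $-[JW,W']=J[W,W']$ becomes $-\operatorname{i}[W,W']=J[W,W']$, which says exactly that $[W,W']$ lies in the $-\operatorname{i}$-eigenspace $\g^{0,1}$; this is the second equation in \eqref{stru}, namely $[\g^{1,0},\g^{1,0}]\subseteq\g^{0,1}$. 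If instead $W\in\g^{1,0}$ and $\ov{W}\in\g^{0,1}$, then comparing $-[JW,\ov{W}]=-\operatorname{i}[W,\ov{W}]$ with $-[W,J\ov{W}]=\operatorname{i}[W,\ov{W}]$ forces $[W,\ov{W}]=0$, giving the first equation $[\g^{1,0},\g^{0,1}]=0$. Conversely, assuming \eqref{stru}, one checks on each pair of eigenspaces that (3) holds, and extends $\C$-bilinearly. The case of two vectors in $\g^{0,1}$ is the complex conjugate of the $\g^{1,0}$ case and needs no separate argument.

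Next I would connect these to the differential-form condition (2). The key is the standard Chevalley--Eilenberg formula $\mathrm{d}\alpha(U,V)=-\alpha([U,V])$ for a one-form $\alpha\in\g^*$ and $U,V\in\g\otimes\C$. For $\alpha\in\Lambda^{1,0}\g^*$ (a one-form annihilating $\g^{0,1}$), the $(2,0)$-part $\partial\alpha$ is the component of $\mathrm{d}\alpha$ evaluated on pairs from $\g^{1,0}$, and the mixed part is evaluated on one vector from each eigenspace. Since $\partial\alpha(W,W')=-\alpha([W,W'])$ for $W,W'\in\g^{1,0}$, the vanishing $\partial\Lambda^{1,0}\g^*=0$ holds for all such $\alpha$ precisely when $[W,W']$ has no $\g^{1,0}$-component, i.e. $[\g^{1,0},\g^{1,0}]\subseteq\g^{0,1}$. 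Similarly $\ov{\partial}\alpha(W,\ov{W}')=-\alpha([W,\ov{W}'])$ vanishes for all $(1,0)$-forms $\alpha$ exactly when the $\g^{1,0}$-component of $[\g^{1,0},\g^{0,1}]$ vanishes; together with its conjugate this yields $[\g^{1,0},\g^{0,1}]=0$. Thus (2) is simply the dual reformulation of \eqref{stru}.

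I expect the main obstacle to be purely bookkeeping: keeping straight which bidegree component each bracket contributes to, and verifying that the form condition (2) captures both equations in \eqref{stru} without over- or under-counting. In particular, one must be careful that $\partial\Lambda^{1,0}=0$ alone encodes $[\g^{1,0},\g^{1,0}]\subseteq\g^{0,1}$, while $\ov{\partial}\Lambda^{1,0}=0$ encodes the vanishing of only the $\g^{1,0}$-part of $[\g^{1,0},\g^{0,1}]$; the full vanishing $[\g^{1,0},\g^{0,1}]=0$ follows because the bracket of a $(1,0)$ and a $(0,1)$ vector is the conjugate of a bracket of the same type, so its $\g^{0,1}$-part vanishes automatically once the $\g^{1,0}$-part does. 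Since each implication reduces to evaluating $\C$-linear maps on eigenvectors, no genuine difficulty arises, which is why the statement is advertised as easy to prove.
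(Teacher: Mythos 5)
Your proof is correct. The paper itself gives no proof of this proposition --- it is introduced as an ``easy-prove'' statement --- so there is nothing to diverge from; your argument (testing condition 3 on $J$-eigenvectors to obtain the equivalence with \eqref{stru}, then dualizing via the Chevalley--Eilenberg formula $\mathrm{d}\alpha(U,V)=-\alpha([U,V])$ to obtain condition 2) is exactly the standard computation the authors intended. In particular you correctly handle the one genuinely non-obvious point: $\ov{\partial}\Lambda^{1,0}\g^*=0$ only forces the $(1,0)$-part of the mixed brackets to vanish, and the $(0,1)$-part then vanishes because $[\g^{1,0},\g^{0,1}]$ is invariant under conjugation.
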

The following result can be viewed as a corollary of Theorem \ref{main}.
\begin{cor}
Let $(M,J)$ be a quasi-K\"ahler Chern-flat manifold. Then every left-invariant almost Hermitian
metric on $(M,J)$ is quasi-K\"ahler and Chern-flat.
\end{cor}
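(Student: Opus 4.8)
The plan is to reduce everything to the almost complex Lie algebra $(\g,J)$ and to exploit Proposition \ref{equivaChernAlgebra}, whose three equivalent conditions involve only the pair $(\g,J)$ and no metric at all. First I would invoke Theorem \ref{main}: since $(M,J)$ is a (compact) quasi-K\"ahler Chern-flat manifold, it is isomorphic to a $2$-step nilpotent nilmanifold $G/\Gamma$ carrying a left-invariant almost complex structure whose almost complex Lie algebra $(\g,J)$ satisfies \eqref{stru}. Hence $(\g,J)$ is quasi-K\"ahler and Chern-flat in the sense of the definition, and by Proposition \ref{equivaChernAlgebra} it satisfies $\partial\Lambda^{1,0}\g^*=\ov{\partial}\Lambda^{1,0}\g^*=0$. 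Concretely, fixing a left-invariant $(1,0)$-coframe $\{\zeta_1,\dots,\zeta_n\}$, this means $\partial\zeta_i=\ov{\partial}\zeta_i=0$ for every $i=1,\dots,n$.

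Now let $g$ be an arbitrary left-invariant almost Hermitian metric on $(M,J)$. The Chern-flatness of $g$ is immediate: condition \eqref{stru} contains $[\g^{1,0},\g^{0,1}]=0$, so Proposition \ref{proppreliminare} applies and guarantees that every left-invariant almost Hermitian metric on $(M,J)$ has trivial Chern holonomy. It therefore remains to show that $g$ is quasi-K\"ahler, i.e. that its K\"ahler form $\omega_g$ satisfies $\ov{\partial}\omega_g=0$.

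For this I would write $\omega_g$ in the fixed coframe. Since $g$ and $J$ are both left-invariant, $\omega_g$ is a left-invariant $(1,1)$-form, hence $\omega_g=\operatorname{i}\sum_{j,k}h_{j\ov{k}}\,\zeta_j\wedge\zeta_{\ov{k}}$ for a constant positive Hermitian matrix $(h_{j\ov{k}})$. Applying $\ov{\partial}$ term by term via the Leibniz rule, the $(1,2)$-component of $d(\zeta_j\wedge\zeta_{\ov{k}})$ is $\ov{\partial}\zeta_j\wedge\zeta_{\ov{k}}-\zeta_j\wedge\ov{\partial}\zeta_{\ov{k}}$, where $\ov{\partial}\zeta_{\ov{k}}=\overline{\partial\zeta_k}$. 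Both $\ov{\partial}\zeta_j$ and $\partial\zeta_k$ vanish by the previous step, so $\ov{\partial}(\zeta_j\wedge\zeta_{\ov{k}})=0$ for all $j,k$, and since the coefficients $h_{j\ov{k}}$ are constant we conclude $\ov{\partial}\omega_g=0$. This shows that $g$ is quasi-K\"ahler, completing the argument.

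The step requiring care is the last one: in the non-integrable setting $d$ does not reduce to $\partial+\ov{\partial}$ but decomposes as $d=A+\partial+\ov{\partial}+\ov{A}$, so one must isolate exactly the $(1,2)$-component of $d\omega_g$ (which, by definition, is $\ov{\partial}\omega_g$) and verify that the extra operators $A,\ov{A}$ cannot contribute to it. The bookkeeping is handled cleanly by the identities $\ov{\partial}\zeta_i=0$ and $\partial\zeta_i=0$ supplied by the second condition of Proposition \ref{equivaChernAlgebra}, which is precisely why I would route the proof through that proposition rather than differentiating $\omega_g$ by brute force. Once these two identities are available, the whole computation is metric-independent, and this metric-independence is exactly the content of the corollary: being quasi-K\"ahler and Chern-flat is a property of $(\g,J)$ alone.
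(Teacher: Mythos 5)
Your proof is correct, and its quasi-K\"ahler half takes a genuinely different route from the paper's. The paper's own proof is a single Chern-connection computation: choose the left-invariant $(1,0)$-coframe unitary for $g$, so that $g=\sum_i\zeta_i\odot\zeta_{\ov{i}}$, use \eqref{stru} to check that the dual frame satisfies $\widetilde{\nabla}_iZ_j=\widetilde{\nabla}_{\ov{i}}Z_j=0$, and read off both properties from the existence of this parallel unitary frame, with the quasi-K\"ahler conclusion compressed into the words ``this implies''. You instead split the two claims: Chern-flatness via Proposition~\ref{proppreliminare} (the same parallel-frame mechanism, quoted as a black box), and quasi-K\"ahlerness on the form side, from condition 2 of Proposition~\ref{equivaChernAlgebra} plus the check that $A$ and $\ov{A}$ cannot feed into the $(1,2)$-component of ${\rm d}\omega_g$. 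That check is the one delicate point and you handle it correctly; in fact \eqref{stru} gives slightly more, namely that each ${\rm d}\zeta_i$ is purely of type $(0,2)$ (its $(2,0)$-part vanishes because $\zeta_i$ annihilates $[\g^{1,0},\g^{1,0}]\subseteq\g^{0,1}$, its $(1,1)$-part because $[\g^{1,0},\g^{0,1}]=0$), so ${\rm d}\omega_g$ lies in $\Omega^{3,0}\oplus\Omega^{0,3}$ and both $\partial\omega_g$ and $\ov{\partial}\omega_g$ vanish. What each approach buys: the paper's is shorter and treats both properties at once, at the price of leaving the quasi-K\"ahler step implicit; yours makes that step explicit and works for an arbitrary constant Hermitian matrix $h_{j\ov{k}}$ rather than a coframe normalized to $g$. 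One caveat you inherited from the paper and resolved correctly: ``left-invariant'' only makes sense once $M$ is presented as a nilmanifold, and that presentation comes from Theorem~\ref{main}, which requires compactness -- your parenthetical ``(compact)'' is the right reading of the corollary's hypothesis.
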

\begin{proof}
Let $g$ be a left-invariant almost Hermitian metric  on $(M,J)$, then we can find a left-invariant $(1,0)$-coframe
$\{\zeta_1,\dots,\zeta_n\}$ with respect to which  $g$ writes as $g=\sum_{i=1}^n\zeta_i\odot\zeta_{\ov{i}}$. Now using \eqref{stru}
it easy to see that the
frame $\{Z_1,\dots,Z_n\}$  dual to $\{\zeta_1,\dots,\zeta_n\}$ satisfies
$$
\widetilde{\nabla}_{i}Z_j=\widetilde{\nabla}_{\ov{i}}Z_j=0\,,\quad i=1,\dots,n\,.
$$
This implies that every left-invariant almost Hermitian metric $g$ on $(M,J)$ is quasi-K\"ahler and Chern-flat.
\end{proof}

\subsection{Examples of quasi-K\"ahler Chern-flat Lie algebras}\label{examples}
In this subsection we show that
it is possible to construct a quasi-K\"ahler Chern-flat Lie algebra
$\mathfrak{h}_{\overline{\mathbb{C}}}$ starting from
from an arbitrary $2$-step nilpotent real Lie algebra $\mathfrak{h}$.\\

Let $(\mathfrak{h},[\,,\,]_{\mathfrak{h}})$ be a $2$-step nilpotent Lie algebra and let $(\mathfrak{h}_{\overline{\mathbb{C}}}=\mathfrak{h}\otimes \C,J_{\mathfrak{h}})$ be the almost complex Lie algebra having the bracket
defined by the relation
$$
[X\otimes a,Y\otimes b]=\ov{ab}\,[X,Y]_\mathfrak{h}
$$
and as almost complex structure the one defined by
$$
J_{\mathfrak{h}}(X\otimes a)=X\otimes \operatorname{i}a\,.
$$
\begin{prop} $(\mathfrak{h}_{\overline{\mathbb{C}}}=\mathfrak{h}\otimes \C,J_{\mathfrak{h}})$ is a quasi-K\"ahler Chern-flat Lie algebra.
\end{prop}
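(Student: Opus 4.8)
The plan is to verify the two structural conditions \eqref{stru} directly from the explicit definitions of the bracket and the almost complex structure on $\mathfrak{h}_{\overline{\C}}$. First I would describe the $(1,0)$ and $(0,1)$ eigenspaces of $J_{\mathfrak{h}}$ explicitly. Since $J_{\mathfrak{h}}(X\otimes a)=X\otimes\operatorname{i}a$, a vector $X\otimes(u+\operatorname{i}v)$ lies in $\mathfrak{h}_{\overline{\C}}^{1,0}$ precisely when it is a $+\operatorname{i}$-eigenvector of $J_{\mathfrak{h}}$; identifying $\mathfrak{h}\otimes\C$ with pairs built from $\mathfrak{h}$, one checks that $\mathfrak{h}_{\overline{\C}}^{1,0}=\{X\otimes a : a\in\C\}$ with the $\C$-action twisted so that $J_{\mathfrak{h}}$ acts as multiplication by $\operatorname{i}$, and $\mathfrak{h}_{\overline{\C}}^{0,1}$ is its conjugate. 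The cleanest route, however, is to avoid coordinates on the eigenspaces and instead verify condition 3 of Proposition \ref{equivaChernAlgebra}, namely that $J_{\mathfrak{h}}[U,V]=-[J_{\mathfrak{h}}U,V]=-[U,J_{\mathfrak{h}}V]$ for all $U,V\in\mathfrak{h}_{\overline{\C}}$, since that single identity is equivalent to being quasi-K\"ahler and Chern-flat.

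To carry this out, I would take $U=X\otimes a$ and $V=Y\otimes b$ and compute each of the three bracket expressions using the defining relation $[X\otimes a,Y\otimes b]=\overline{ab}\,[X,Y]_{\mathfrak{h}}$. For the left-hand side, $[U,V]=\overline{ab}\,[X,Y]_{\mathfrak{h}}$, and applying $J_{\mathfrak{h}}$ multiplies the scalar by $\operatorname{i}$, giving $J_{\mathfrak{h}}[U,V]=\operatorname{i}\,\overline{ab}\,[X,Y]_{\mathfrak{h}}$ (here one must be careful that $[X,Y]_{\mathfrak{h}}$ carries an implicit scalar $1$, so $J_{\mathfrak{h}}$ sends $W\otimes\overline{ab}$ to $W\otimes\operatorname{i}\,\overline{ab}$). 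For the middle expression, $[J_{\mathfrak{h}}U,V]=[X\otimes\operatorname{i}a,\,Y\otimes b]=\overline{(\operatorname{i}a)b}\,[X,Y]_{\mathfrak{h}}=\overline{\operatorname{i}}\,\overline{ab}\,[X,Y]_{\mathfrak{h}}=-\operatorname{i}\,\overline{ab}\,[X,Y]_{\mathfrak{h}}$, whose negative is exactly $\operatorname{i}\,\overline{ab}\,[X,Y]_{\mathfrak{h}}$. The same computation with $J_{\mathfrak{h}}$ applied to $V$ gives the third expression, matching by symmetry. So all three agree and condition 3 holds on decomposable tensors, hence on all of $\mathfrak{h}_{\overline{\C}}$ by bilinearity.

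Before concluding I would check two background points that the proposition tacitly requires. First, that the stated bracket genuinely defines a Lie algebra structure: antisymmetry follows from $\overline{ab}=\overline{ba}$ together with antisymmetry of $[\,,\,]_{\mathfrak{h}}$, and the Jacobi identity reduces to the Jacobi identity of $\mathfrak{h}$ after tracking the conjugated scalar coefficients. Second, that $\mathfrak{h}_{\overline{\C}}$ is itself $2$-step nilpotent, which is immediate since any iterated bracket in $\mathfrak{h}_{\overline{\C}}$ is a scalar multiple of the corresponding iterated bracket in $\mathfrak{h}$, so the descending central series of $\mathfrak{h}_{\overline{\C}}$ terminates as soon as that of $\mathfrak{h}$ does. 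The main subtlety—and the only place where the conjugation in the bracket is doing essential work—is the bookkeeping of the complex conjugate: the factor $\overline{\operatorname{i}}=-\operatorname{i}$ produced when $J_{\mathfrak{h}}$ is moved inside the bracket is precisely what yields the minus signs in condition 3, so I would state that step carefully rather than treat it as routine. Everything else is a direct verification.
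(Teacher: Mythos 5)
Your core verification---reducing the statement to condition 3 of Proposition \ref{equivaChernAlgebra} and checking it on decomposable tensors via $\overline{\operatorname{i}}=-\operatorname{i}$---is correct and is exactly the paper's route. The genuine gap is in your background check of the Jacobi identity: the claim that it ``reduces to the Jacobi identity of $\mathfrak{h}$ after tracking the conjugated scalar coefficients'' is false, because the conjugations attach \emph{different} scalars to the three cyclic terms. Indeed
\[
[[X\otimes a,Y\otimes b],Z\otimes c]=[[X,Y],Z]\otimes ab\overline{c}\,,
\]
while the other two cyclic terms are $[[Y,Z],X]\otimes bc\overline{a}$ and $[[Z,X],Y]\otimes ca\overline{b}$; since $ab\overline{c}$, $bc\overline{a}$, $ca\overline{b}$ need not coincide, no common scalar can be factored out and the Jacobi identity of $\mathfrak{h}$ cannot be invoked. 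For instance, with $a=b=1$ and $c=\operatorname{i}$ the cyclic sum equals $-\operatorname{i}[[X,Y],Z]+\operatorname{i}[[Y,Z],X]+\operatorname{i}[[Z,X],Y]=-2\operatorname{i}[[X,Y],Z]$ by the Jacobi identity in $\mathfrak{h}$, which is nonzero whenever $[[X,Y],Z]\neq 0$. So for a general Lie algebra $\mathfrak{h}$ the twisted bracket is not a Lie bracket at all.

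What actually rescues the Jacobi identity is $2$-step nilpotency: every triple bracket vanishes individually, $[[X\otimes a,Y\otimes b],Z\otimes c]=[[X,Y],Z]\otimes ab\overline{c}=0$, because $[[X,Y],Z]=0$ in $\mathfrak{h}$. This is precisely the first (and essentially only) computation in the paper's proof, and it is the reason the construction is stated only for $2$-step nilpotent $\mathfrak{h}$. The repair costs you nothing: the observation you already use to prove $2$-step nilpotency of $\mathfrak{h}\otimes\mathbb{C}$---that every iterated bracket is a scalar multiple of the corresponding iterated bracket of $\mathfrak{h}$---shows that all triple brackets vanish, which gives the Jacobi identity trivially. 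Replace the ``reduces to the Jacobi identity of $\mathfrak{h}$'' sentence with that remark, and your proof is complete and matches the paper's.
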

\begin{proof}  Since $\mathfrak{h}$ is a $2$-step nilpotent Lie algebra, then we have
$[[X \otimes a , Y \otimes b], Z \otimes c] = ab \overline{c}\,[[X,Y],Z] = 0$. So $(\mathfrak{h}_{\overline{\mathbb{C}}}=\mathfrak{h}\otimes \C,J_{\mathfrak{h}})$ is also a $2$-step nilpotent
Lie algebra. To finish the proof it is enough to observe that the condition $3$ of Proposition \ref{equivaChernAlgebra} hold by the construction of the bracket.
\end{proof}
Now we give an example of the above construction. Let $\mathfrak{h}= {\rm span}_{\R} \{ X,Y,Z \}$ be the $3$-dimensional real
Heisenberg Lie algebra, i.e. $\mathfrak{h}$ is the $2$-step nilpotent Lie algebra with bracket $[X,Y] = Z$. Let
$(\mathfrak{h}_{\overline{\C}},J_{\mathfrak{h}})$ be its associated quasi-K\"ahler Chern-flat Lie algebra. Then $\mathfrak{h}_{\overline{\C}}$ is the Lie algebra of the Iwasawa manifold described in Section \ref{Iwasawa} equipped with
the almost complex structure $J_3$. An explicit isomorphism between the two Lie algebras can be described as follows:
\[ X_1 \equiv X \otimes 1   \, \, , \, \, X_4 \equiv X \otimes \operatorname{i} , \]
\[ X_2 \equiv Y \otimes 1   \, \, , \, \, X_5 \equiv Y \otimes  \operatorname{i} , \]
\[ X_3 \equiv Z \otimes 1   \, \, , \, \, X_6 \equiv Z \otimes  \operatorname{i}\, . \]
\subsection{Quasi-K\"ahler Chern-flat Lie algebras in low dimension}\label{4.2}
In this subsection we take into account quasi-K\"ahler
Chern-flat Lie algebras in low dimension. In complex dimension $2$ there exists only the abelian case. In view of
\cite{TV} in complex dimension $3$ there are the abelian Lie algebra and the Lie algebra of the complex Heisenberg group. The $4$-dimensional case
is described by the following
\begin{theorem}
Let $(\g,J)$ be a non-complex quasi-K\"ahler Chern-flat Lie algebra of complex dimension $4$. Then there exists a $(1,0)$-frame $\{Z_1,\dots,Z_4\}$ on $\g$  such that
$$
[Z_1,Z_2]=Z_{\ov{3}}
$$
and the other brackets involving the vectors of the frame vanish.
\end{theorem}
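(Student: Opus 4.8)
The plan is to encode the whole structure in the bracket restricted to the $(1,0)$-part and to reduce the classification to a single rank computation. Fix a $(1,0)$-frame $\{Z_1,\dots,Z_4\}$ on $\g$; by \eqref{stru} (and $[\g^{1,0},\g^{0,1}]=0$) the bracket is completely determined by the structure constants $c_{ij}^{\ov r}$ in $[Z_i,Z_j]=\sum_r c_{ij}^{\ov r}Z_{\ov r}$ together with the conjugate relations, where $c_{\ov r\ov k}^{\,l}=\ov{c_{rk}^{\ov l}}$. Exactly as in the proof of Theorem \ref{main}, but now with constant structure constants, the Jacobi identity collapses to the single family
$$
\sum_r c_{ij}^{\ov r}c_{\ov r\ov k}^{\,l}=0\,,\qquad i,j,k,l=1,\dots,4\,.
$$
It is convenient to rewrite this invariantly. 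Let $\mu\colon\Lambda^2\g^{1,0}\to\g^{0,1}$ be the linear map $\mu(Z_i\wedge Z_j)=[Z_i,Z_j]$ and $\ov\mu\colon\Lambda^2\g^{0,1}\to\g^{1,0}$ its complex conjugate. The relation above says $\ov\mu\big((\operatorname{Im}\mu)\wedge\g^{0,1}\big)=0$, and conjugating, $\mu\big((\operatorname{Im}\ov\mu)\wedge\g^{1,0}\big)=0$. Finally note that $(\g,J)$ is non-complex precisely when $\mu\neq0$.

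The key step, and the one I expect to be the heart of the argument, is a dimension count forcing $\operatorname{Im}\mu$ to be a line. Put $W:=\operatorname{Im}\mu\subseteq\g^{0,1}$ and $r:=\dim_\C W$; since $\operatorname{Im}\ov\mu$ is the conjugate subspace of $W$ it also has dimension $r$, and I write $\ov W:=\operatorname{Im}\ov\mu\subseteq\g^{1,0}$. The conjugated Jacobi relation $\mu(\ov W\wedge\g^{1,0})=0$ says that $\mu$ vanishes as soon as one of its arguments lies in $\ov W$, so $\mu$ factors through $\Lambda^2(\g^{1,0}/\ov W)$. Consequently
$$
r=\dim_\C W\le\dim_\C\Lambda^2\big(\g^{1,0}/\ov W\big)=\tfrac{(4-r)(3-r)}{2}\,.
$$
This inequality is violated for $r=2,3,4$, so together with $\mu\neq0$ it forces $r=1$.

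Once $r=1$ the statement follows by linear algebra. Choosing a generator $e$ of the line $W$, every bracket is a multiple of $e$, hence $[Z_i,Z_j]=\omega(Z_i,Z_j)\,e$ for a unique alternating form $\omega\in\Lambda^2(\g^{1,0})^*$, and the vanishing $\mu(\ov W\wedge\g^{1,0})=0$ becomes $\ov e\in\operatorname{rad}\omega$. Being a non-zero alternating form on a $4$-dimensional space, $\omega$ has even rank; as its radical already contains the non-zero vector $\ov e$, its rank must be exactly $2$ and $\dim_\C\operatorname{rad}\omega=2$. I would then pick an adapted frame: set $Z_3:=\ov e$, complete it to a basis $\{Z_3,Z_4\}$ of $\operatorname{rad}\omega$, and choose $Z_1,Z_2$ projecting to a symplectic basis of $\g^{1,0}/\operatorname{rad}\omega$ normalised so that $\omega(Z_1,Z_2)=1$. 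Because $\ov W$ is the conjugate of $W$, the generator $e$ and the radical vector $\ov e$ automatically carry the same index, so in this frame $e=Z_{\ov3}$ and every bracket vanishes except $[Z_1,Z_2]=Z_{\ov3}$, which is the asserted normal form. The only genuine obstacle is the rank bound of the second paragraph; the normalisation here is routine.
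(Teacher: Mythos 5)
Your proof is correct, but it follows a genuinely different route from the paper's. The paper argues by successive frame normalizations: starting from an arbitrary $(1,0)$-frame it uses the Jacobi identity to show $[Z_1,Z_2]$ cannot lie in ${\rm Span}_{\C}\{Z_{\ov{1}},Z_{\ov{2}}\}$, rescales $Z_3$ to get $[Z_1,Z_2]=Z_{\ov{3}}$, deduces that $Z_3$ is central, and then adjusts $Z_4$ (twice, via $Z_4\mapsto \frac{1}{c_{14}^{\ov{3}}}Z_4-Z_3$ and its analogue) to kill $[Z_1,Z_4]$ and $[Z_2,Z_4]$. You instead prove an invariant statement first: writing the bracket as $\mu\colon\Lambda^2\g^{1,0}\to\g^{0,1}$, the Jacobi identity makes $\mu$ factor through $\Lambda^2\bigl(\g^{1,0}/\ov{W}\bigr)$ with $\ov{W}=\operatorname{Im}\ov{\mu}$, whence the rank bound $r\le\tfrac{(4-r)(3-r)}{2}$ forces $\dim_\C\operatorname{Im}\mu=1$; the normal form then follows from the linear algebra of the induced alternating form $\omega$ (even rank, radical containing $\ov{e}$, symplectic basis of the quotient). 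Your argument is essentially the technique the paper itself deploys only later, in Theorem \ref{centro1}, where the bracket is encoded as a $2$-form valued in the center; applying it already in dimension $4$ buys two things: it avoids the paper's sequential basis adjustments (where one should, strictly speaking, verify that the modification of $Z_4$ killing $[Z_1,Z_4]$ does not reintroduce a nonzero $[Z_2,Z_4]$), and it explains structurally why complex dimension $4$ is special --- your inequality first admits $r=2$ at $n=5$, exactly matching the irreducible example $[Z_1,Z_2]=Z_{\ov{3}}$, $[Z_2,Z_4]=Z_{\ov{5}}$ given in Section \ref{4.2}. The paper's proof, by contrast, is more elementary and purely computational. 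All the individual steps you use (the identification of non-integrability with $\mu\neq0$, the reduction of Jacobi to $\ov{\mu}\bigl((\operatorname{Im}\mu)\wedge\g^{0,1}\bigr)=0$, and the conjugation relation $e=Z_{\ov{3}}$ for $Z_3:=\ov{e}$) check out.
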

\begin{proof}
Let $\{Z_1,\dots,Z_4\}$ be an arbitrary complex frame of type $(1,0)$ on $\g$.
Since $J$ is not integrable, there exists at least a bracket involving two vectors of the frame different from zero.
We may assume that
$[Z_1,Z_2]\neq 0$ and
we can write $[Z_1,Z_2]=\sum_{i=1}^4 c_{12}^{\ov{i}}Z_{\ov{i}}$. The Jacobi identity implies that it is not possible that
$[Z_1,Z_2]\in{\rm Span}_{\C}\{Z_1,Z_2\}$. Indeed if $[Z_1,Z_2]=c_{12}^{\ov{1}}Z_{\ov{1}}+c_{12}^{\ov{2}}Z_{\ov{2}}$, then
$$
[[Z_1,Z_2],Z_{\ov{1}}]=0\,,\quad [[Z_1,Z_2],Z_{\ov{2}}]=0
$$
imply $[Z_1,Z_2]=0$ which is a contradiction. So we can assume that $c_{12}^{\ov{3}}\neq 0$. Replacing $Z_{3}$ with
$Z_{3}=\frac{1}{c_{\ov{12}}^{3}}\,(Z_{3}-\sum_{i\neq 3}c_{\ov{1}\ov{2}}^{i}Z_{i})$, we obtain
$$
[Z_1,Z_2]=Z_{\ov{3}}\,.
$$
Now we observe that the Jacobi identity implies that the vector
$Z_{3}$ belongs to the center of $\g$ since
$$
0=[[Z_1,Z_2],Z_{\ov{k}}]=[Z_{\ov{3}},Z_{\ov{k}}]\,,\quad k=1,\dots,4\,.
$$
The last step consists to show that we may assume $[Z_1,Z_4]=[Z_2,Z_4]=0$. Setting $[Z_1,Z_4]=c_{14}^{\ov{s}}Z_{\ov{s}}$, we have
$$
\begin{aligned}
0&=[[Z_1,Z_4],Z_{\ov{1}}]=-c_{14}^{\ov{2}}Z_{\ov{3}}-c_{14}^{\ov{4}}[Z_{\ov{1}},Z_{\ov{4}}]\,,\\
0&=[[Z_2,Z_4],Z_{\ov{2}}]=c_{14}^{\ov{1}}Z_{\ov{3}}-c_{24}^{\ov{4}}[Z_{\ov{2}},Z_{\ov{4}}]\,,
\end{aligned}
$$
i.e.
$$
\begin{aligned}
&c_{14}^{\ov{2}}Z_{\ov{3}}=-c_{14}^{\ov{4}}[Z_{\ov{1}},Z_{\ov{4}}]\,,\quad
&c_{14}^{\ov{1}}Z_{\ov{3}}=c_{24}^{\ov{4}}[Z_{\ov{2}},Z_{\ov{4}}]
\end{aligned}
$$
which imply $[Z_{1},Z_{4}]=c_{14}^{\ov{1}}Z_{\ov{1}}+c_{14}^{\ov{3}}Z_{\ov{3}}$,
$[Z_2,Z_4]=c_{24}^{\ov{2}}Z_{\ov{2}}+c_{24}^{\ov{3}}Z_{\ov{3}}$.
Hence if $[Z_1,Z_4]\neq 0$, then it is
necessary to be $c_{14}^{\ov{3}}\neq 0$. So in the case $[Z_1,Z_4]\neq 0$ if we replace $Z_4$ with $\frac{1}{c_{14}^{\ov{3}}}Z_4-Z_3$
we force $[Z_1,Z_4]$ to be a multiple of
$Z_{\ov{1}}$ and the Jacobi identity implies that $[Z_1,Z_4]=0$. Analogously if $[Z_2,Z_4]\neq 0$, then one replaces $Z_4$ with
$\frac{1}{c_{24}^{\ov{3}}}Z_4-Z_3$ obtaining $[Z_1,Z_4]=[Z_2,Z_4]=0$.
\end{proof}
In view of this last theorem, any Chern-flat quasi-K\"ahler Lie algebra of complex dimension $4$ is reducible.
In complex dimension $5$ things work differently:
for instance the almost complex Lie algebra $(\g,J)$ admitting a $(1,0)$-frame $\{Z_1,\dots,Z_5\}$ satisfying the following structure equations
$$
[Z_1,Z_2]=Z_{\ov{3}}\,,\quad [Z_2,Z_4]=Z_{\ov{5}}
$$
gives rise an example of a irreducible quasi-K\"ahler Chern-flat Lie algebra.

\subsection{Chern-flat Lie algebras with small center}\label{4.3}
In this subsection we study quasi-K\"ahler Chern-flat Lie algebras having the center of complex dimension equal to one. Such a condition is admissible
when the Lie algebra has complex dimension odd, only and in this case the space reduces to a standard model:
\begin{theorem}\label{centro1}
Let $(\g,J)$ be a quasi-K\"ahler Chern-flat Lie algebra having center $\mathfrak{z}$ of complex dimension one.
Then the complex dimension of $\g$ is odd. Moreover, there exists a $(1,0)$-frame
$\{Z_1,\dots,Z_n\}$ on $\g$  such that
$$
[Z_i,Z_j]=Z_{\ov{n}}\,,\,\,\mbox{for }i,j=1,\dots,n-1,\,\,i< j\,.
$$
where $Z_{\ov{n}}$ is a generator of $\mathfrak{z}^{(0,1)}$.
\end{theorem}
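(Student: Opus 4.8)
The plan is to encode the Lie bracket as a single alternating form on $\g^{1,0}$ and then read off both the parity of the dimension and the normal form from elementary linear algebra.

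First I would establish that $[\g^{1,0},\g^{1,0}]\subseteq\mathfrak z^{0,1}$. Fix $Z,W,U\in\g^{1,0}$. By \eqref{stru} the bracket $[Z,W]$ lies in $\g^{0,1}$, so $[[Z,W],U]=0$ since $[\g^{0,1},\g^{1,0}]=0$; and the Jacobi identity gives $[[Z,W],\ov U]=[Z,[W,\ov U]]-[W,[Z,\ov U]]=0$, since $[\g^{1,0},\g^{0,1}]=0$. Hence $[Z,W]$ is central, so $[\g^{1,0},\g^{1,0}]\subseteq\mathfrak z^{0,1}$. As $\mathfrak z$ is $J$-invariant of complex dimension one, $\mathfrak z^{0,1}=\C\,\zeta$ for a generator $\zeta$, and I may write $[Z,W]=\lambda(Z,W)\,\zeta$, defining an alternating $\C$-bilinear form $\lambda$ on the $n$-dimensional space $\g^{1,0}$.

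Next I would compute the radical of $\lambda$. A vector $Z\in\g^{1,0}$ kills $\lambda$ exactly when it commutes with $\g^{1,0}$; since $Z$ automatically commutes with $\g^{0,1}$, this means $Z\in\mathfrak z^{1,0}$. Thus $\mathrm{rad}(\lambda)=\mathfrak z^{1,0}$ is one-dimensional and $\lambda$ descends to a \emph{nondegenerate} alternating form on the quotient $W:=\g^{1,0}/\mathfrak z^{1,0}$ of dimension $n-1$. A nondegenerate alternating form exists only in even dimension, so $n-1$ is even and the complex dimension $n$ of $\g$ is odd, which is the first assertion.

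For the normal form I would compare $\lambda$ with the alternating form whose Gram matrix is $A=(\mathrm{sgn}(j-i))_{i,j}$, equal to $+1$ above the diagonal and $-1$ below. A short computation (e.g. the row operations $R_i\mapsto R_i-R_{i+1}$) gives $\det A=1$ whenever $n-1$ is even, so $A$ is nondegenerate; by the classification of alternating forms over $\C$ it is congruent to $\lambda$, and any congruence produces a basis $w_1,\dots,w_{n-1}$ of $W$ with $\lambda(w_i,w_j)=1$ for $i<j$. Lifting the $w_i$ to $Z_1,\dots,Z_{n-1}\in\g^{1,0}$ and taking $Z_n$ to span $\mathfrak z^{1,0}$ yields a $(1,0)$-frame with $[Z_i,Z_j]=\zeta$ for $i<j\le n-1$ and all other brackets zero. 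Finally, replacing $Z_n$ by $\bar c\,Z_n$, where $\zeta=c\,\overline{Z_n}$, arranges $\overline{Z_n}=\zeta$, so that $[Z_i,Z_j]=Z_{\ov n}$ as required. I expect the genuine obstacle to be this last stage: obtaining the precise form $\lambda(Z_i,Z_j)=1$ rather than a generic Darboux basis relies on the nondegeneracy of the all-ones matrix $A$ in even size, and the final rescaling must be checked against the reality constraint $Z_{\ov n}=\overline{Z_n}$ linking $Z_n$ to its conjugate.
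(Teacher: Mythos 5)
Your proposal is correct and follows essentially the same route as the paper: encode the bracket as a center-valued alternating form, deduce oddness of $\dim_\C\g$ from nondegeneracy off the center, check that the standard form $\sum_{i<j}dz_i\wedge dz_j$ is nondegenerate in even dimension, and invoke the equivalence of nondegenerate alternating forms over $\C$ to produce the frame. The only differences are cosmetic — you work on the quotient $\g^{1,0}/\mathfrak z^{1,0}$ rather than a $J$-invariant complement, verify nondegeneracy of the all-ones skew matrix by a determinant/Pfaffian computation where the paper computes the top wedge power, and you make explicit two details the paper leaves implicit (the Jacobi-identity proof that $[\g^{1,0},\g^{1,0}]\subseteq\mathfrak z^{0,1}$, and the final rescaling so that $\overline{Z_n}$ is the chosen generator of $\mathfrak z^{0,1}$).
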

\begin{proof}
Since $(\g,J)$ is Chern-flat, then the center of $\g$ is $J$-invariant.
Let $\g = \mathfrak{v} \oplus \mathfrak{z}$ be a decomposition $\g$, where $\mathfrak{v}$ is a $J$-invariant subspace of $\g$.
Then $\g^{1,0}$ splits in
$\g^{1,0}= \mathfrak{v}^{1,0}\oplus\mathfrak{z}^{1,0}$.
The Lie bracket $[\,\,,\,]$ gives rise to a non-degenerated skew-symmetric two form $\omega$ on $\mathfrak{v}^{1,0}$. Namely,
\[ [X,Y] := \omega(X,Y) \ov{A} \, \, \]
where $X,Y \in \mathfrak{v}^{1,0}$ and $A$ is a generator of $\mathfrak{z}^{1,0}$. It is clear that $\omega$ must be non-degenerated
since otherwise $\dim_\C\mathfrak{z}> 1$. This imply that $\dim_\C\mathfrak{v}$ is even. Let $\dim_\C\mathfrak{v}=2m$, then $\dim_\C\g =2m + 1$.
Now we consider the following skew-symmetric $2$-form on $\mathbb{C}^{2k}$:

\[ \Omega_k = \sum_{\ 1 \leq i < j \leq 2k} dz_i \wedge dz_j  \, \,\]
Notice that \[ \Omega_k = \Omega_{k-1} + \alpha \wedge \beta + dz_{2k-1} \wedge dz_{2k} \, , \]
where $\alpha = \sum_{\ 1 \leq i \leq 2k-2} dz_i$ and $\beta = dz_{2k-1} + dz_{2k}$. Then
\[ \Omega_k ^ k = \underbrace{\Omega  \wedge \cdots \wedge \Omega}_{k-times} = \Omega_{k-1}^{k-1} \wedge dz_{2k-1} \wedge dz_{2k} \, . \]
It follows that $\Omega_k ^ k = \bigwedge_{i=1}^{i=2k} dz_i$. So $\Omega_k$ is not degenerated for all $k \in \mathbb{N}$. Since any two
non-degenerated skew-symmetric $2$-forms on $\C^{2k}$ are equivalent,
we get that there exists a complex basis $(Z_1, \cdots Z_{2k})$ of $\mathfrak{v}^{1,0}$ such that:
\[ \omega = \sum_{i < j} Z_i \wedge Z_j \, .\]
Hence $\{Z_1,\dots,Z_{n-1},Z_n=A\}$ is the desired $(1,0)$-frame.
\end{proof}

\subsection{Deformations of quasi-K\"ahler Chern-flat structures on Lie algebras}\label{deformations}
Let $\g$ be a $2$-step nilpotent Lie algebra
and let $\mathcal{M}$ be the moduli space of Chern-flat quasi-K\"ahler
almost complex structures on $\g$. The goal of this section is to determine the virtual tangent
space to  $\mathcal{M}$ at an arbitrary $[J]\in\mathcal{M}$. Let
$\{J_t\}_{t\in(-\delta,\delta)}$ a family of left-invariant Chern-flat quasi-K\"ahler almost
complex structures on $\g$ satisfying $J_0=J$ and let $L:=\frac{{\rm d}}{{\rm d} t}J_{t|t=0}$.
Since for any $t$ we have
$$
J_t^2=-{\rm I}\,,\quad J_t[X,Y]=-[J_tX,Y]=-[X,J_tY]\,,
$$
then $L$ satisfies
\begin{equation}\label{sopra}
LJ=-JL\,,\quad L[X,Y]=-[LX,Y]=-[X,LY]
\end{equation}
for every $X,Y\in \g$. Moreover, if $\phi_t$ is a smooth family of endomorphisms of $\g$, then
$$
\frac{{\rm d}}{{\rm d} t}\phi_{t}J\phi^{-1}_{t\quad\!\!|t=0}=\mathcal{L}_X J\,,
$$
being $\mathcal{L}$ the Lie derivative and $X$ the infinitesimal vector associated to the path $\phi_t$.  We have
$$
(\mathcal{L}_X J)Y=[X,JY]-J[X,Y]=-2J[X,Y]\,,
$$
for every $X,Y\in\g$.
Hence the virtual tangent space to the moduli space $\mathcal{M}$ at an arbitrary point $[J]$ is defined by
$$
T_{[J]}\mathcal{M}=\frac{\{L\in{\rm End}(\g)\,\,|\,\,LJ=-JL\,,\, L[X,Y]=-[LX,Y]=-[X,LY]\,\,\forall X,Y\in\g\}}{\{[X,\cdot\,\,]\,\,|\,\,X\in\g\}}\,.
$$

Note that conditions \eqref{sopra} imply that if $\mathfrak{z}$ denotes the center of $\g$, then
$$
L([\g,\g])=0\,,\quad L(\g)\subseteq \mathfrak{z}\,.
$$
Indeed, if $Z\in\g^{1,0}$ then $L(Z)\in\g^{0,1}$ and if $Z_1,Z_2$ are vectors of type $(1,0)$ one has
$$
L[Z_1,Z_2]=[L(Z_1),Z_2]=0
$$
and if $X\in\mathfrak{c}$, then
$$
[L(X),Y]=-L[X,Y]=0
$$
for any  $Y\in\g$.
\begin{rem}\emph{
Assume that ${\dim}_\C \mathfrak{z}=1$ and let $L$ be an endomorphism of $\g$ satisfying \eqref{sopra}. Then it has to be
$$
L(Z)=l(Z)\,\ov{A}\,,\quad \forall Z\in\g^{1,0}
$$
for some $l\in\g^*$, where $A$ is a fixed generator of $\mathfrak{z}^{1,0}$.
Since $\mathfrak{z}$ is a $J$-invariant subspace  of $\g$, we can
split  $\g$ in  $\g=\mathfrak{v}\oplus\mathfrak{z}$, being $\mathfrak{v}$ a $J$-invariant
complement of $\mathfrak{z}$. The form $l$ can be viewed as a $1$-form on $\mathfrak{v}$. Since the bracket of $\g$ can be identified with a
non-degenerate $2$-form $\omega$ on $\mathfrak{v}$ by the relation $[Z_1,Z_2]=\omega(Z_1,Z_2)\,\ov{A}$, then there exists a vector $X_L$ of  $\g$
such that  $l(Y)=[X,Y]$ for every $Y\in \mathfrak{g}$. This shows  that in this case the vector space $T_{[J]}\mathcal{M}$ is trivial accordingly to
Theorem \ref{centro1}.
}
\end{rem}

\section{Proof of Theorem \ref{tosatti}}
In \cite{TV} we proved that the almost complex structure $J_3$ on the Iwasawa manifold can not be tamed by a symplectic form.
Theorem \ref{tosatti} is the analogue of this result in an arbitrary Chern-flat quasi-K\"ahler manifold. Theorem \ref{tosatti} can be viewed as a
consequence of the following
\begin{lemma}\label{lemmatosatti}
Let $(M,J)$ be a compact Chern-flat quasi-K\"ahler manifold and let
$\beta\in\Omega^{2,0}$ be a $(2,0)$-form satisfying
$$
\overline{\partial}\beta+A\ov{\beta}=0\,,
$$
then $\beta$ is closed, i.e.
$$
{\rm d}\beta=0\,.
$$
\end{lemma}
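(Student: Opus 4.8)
The plan is to exploit the classification. By Theorem \ref{main}, $(M,J)=G/\Gamma$ is a $2$-step nilmanifold carrying a global left-invariant $(1,0)$-frame $\{Z_1,\dots,Z_n\}$ with dual coframe $\{\zeta_1,\dots,\zeta_n\}$, satisfying $[Z_i,Z_{\ov{j}}]=0$ and $[Z_i,Z_j]=\sum_r c_{ij}^{\ov{r}}Z_{\ov{r}}$ with \emph{constant} structure coefficients. The first thing I would record is the structural consequence of Proposition \ref{equivaChernAlgebra}(2), i.e. $\partial\Lambda^{1,0}\g^*=\ov{\partial}\Lambda^{1,0}\g^*=0$: in this frame $\partial,\ov{\partial}$ act only on the \emph{coefficients} of a form, while $A,\ov{A}$ act \emph{algebraically} (zeroth order), through $\ov{A}\zeta_i=d\zeta_i\in\Lambda^{0,2}$ and $A\zeta_{\ov{i}}=d\zeta_{\ov{i}}\in\Lambda^{2,0}$. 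Writing $\beta=\sum_{a<b}\beta_{ab}\,\zeta_a\wedge\zeta_b$ one has $A\beta=0$ (no $(4,-1)$-forms), so $d\beta=\partial\beta+\ov{\partial}\beta+\ov{A}\beta$ and $d\beta=0$ is equivalent to the separate vanishing of these three pieces. A direct computation of $A\ov{\beta}$ then turns the hypothesis $\ov{\partial}\beta+A\ov{\beta}=0$ into the pointwise first-order system
\[ Z_{\ov{m}}\beta_{ab}=\sum_{i} c_{ab}^{\ov{i}}\,\ov{\beta_{im}}, \]
where the coefficients are extended antisymmetrically in their indices.

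The heart of the argument is to show $\ov{\partial}\beta=0$, i.e. $Z_{\ov{m}}\beta_{ab}\equiv0$. I would apply the holomorphic field $Z_m$ to the displayed identity, sum over $m$, and feed the same identity back in through $Z_m\ov{\beta_{im}}=\ov{Z_{\ov{m}}\beta_{im}}$, obtaining
\[ \sum_m Z_m Z_{\ov{m}}\beta_{ab}=\sum_{i,k,m} c_{ab}^{\ov{i}}\,\ov{c_{im}^{\ov{k}}}\,\beta_{km}. \]
The decisive observation is that $\sum_i c_{ab}^{\ov{i}}\,\ov{c_{im}^{\ov{k}}}$ is exactly the $Z_k$-component of $[[Z_a,Z_b],Z_{\ov{m}}]$, which vanishes because $\g$ is $2$-step nilpotent; hence the right-hand side is $0$ and the sub-Laplacian $\sum_m Z_m Z_{\ov{m}}$ annihilates every $\beta_{ab}$. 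Since $M$ is compact and left-invariant fields on a nilmanifold are divergence-free for the invariant volume $dV$, pairing with $\ov{\beta_{ab}}$ and integrating by parts gives $\sum_m\int_M|Z_{\ov{m}}\beta_{ab}|^2\,dV=0$, so $Z_{\ov{m}}\beta_{ab}\equiv0$. Thus $\ov{\partial}\beta=0$; conjugating yields $\partial\ov{\beta}=0$, and since the complex conjugate of the hypothesis reads $\partial\ov{\beta}+\ov{A}\beta=0$, also $\ov{A}\beta=0$.

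It then remains to prove $\partial\beta=0$. Here I would use the Chern-flat identity $[Z_c,Z_{\ov{c}}]=0$: commuting $Z_c$ past $Z_{\ov{c}}$ and integrating by parts twice on compact $M$ gives, term by term, $\int_M|Z_c\beta_{ab}|^2\,dV=\int_M|Z_{\ov{c}}\beta_{ab}|^2\,dV$, whose right-hand side vanishes by the previous paragraph. Hence $Z_c\beta_{ab}\equiv0$ as well, so every coefficient $\beta_{ab}$ is constant on $M$ and $\partial\beta=\sum_{a<b}(\partial\beta_{ab})\wedge\zeta_a\wedge\zeta_b=0$. Combined with $\ov{\partial}\beta=0$ and $\ov{A}\beta=0$ this gives $d\beta=0$, as claimed.

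I expect the main obstacle to be the middle step: the passage from the first-order coefficient constraint to a \emph{vanishing} second-order operator. One must correctly differentiate the identity in the holomorphic directions, re-substitute the hypothesis, and recognize the resulting quadratic expression in the structure constants as the obstruction measured by $[[\g,\g],\g]$ — precisely what $2$-step nilpotency kills. Once this algebraic cancellation is secured, compactness together with the divergence-freeness of the invariant frame makes the integrations by parts routine, and the commutation $[Z_c,Z_{\ov{c}}]=0$ is what upgrades ``$\ov{\partial}$-annihilated coefficients'' to ``constant coefficients''.
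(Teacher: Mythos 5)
Your proof is correct, and it reaches the paper's key intermediate conclusion --- that $\beta$ must be left-invariant --- by a genuinely different mechanism. The paper chooses a coframe adapted to the center, $\{\zeta_1,\dots,\zeta_l,\eta_1,\dots,\eta_m\}$ with the $\eta$'s dual to central directions, and expands $\beta$ in the three blocks $\zeta\wedge\zeta$, $\zeta\wedge\eta$, $\eta\wedge\eta$; in that frame the hypothesis $\ov{\partial}\beta+A\ov{\beta}=0$ \emph{decouples}: the coefficients $b_{ij},d_{ij}$ of the blocks involving $\eta$'s satisfy $\ov{\partial}b_{ij}=\ov{\partial}d_{ij}=0$ outright, hence are constant by compactness, after which the remaining coefficients satisfy $\partial\ov{\partial}a_{ij}=0$ and are constant by compactness again. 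You instead work in an arbitrary invariant $(1,0)$-frame, where the hypothesis stays coupled, and you break the coupling by differentiating it and invoking $2$-step nilpotency --- the vanishing of $\sum_i c_{ab}^{\ov{i}}\,\ov{c_{im}^{\ov{k}}}$, which is the $Z_k$-component of $[[Z_a,Z_b],Z_{\ov{m}}]=0$ --- to get $\sum_m Z_mZ_{\ov{m}}\beta_{ab}=0$; integration by parts (legitimate, since nilpotent groups are unimodular, so invariant fields are divergence-free for the invariant volume) then kills $\ov{\partial}\beta$, and the commutation $[Z_c,Z_{\ov{c}}]=0$ upgrades this to constancy of all coefficients. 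The endgame is identical in both arguments: invariance gives $\partial\beta=\ov{\partial}\beta=0$, the hypothesis then gives $A\ov{\beta}=0$, and conjugation gives $\ov{A}\beta=0$, hence ${\rm d}\beta=0$. What your route buys is analytic self-containedness: the paper asserts, with no more justification than ``since $M$ is compact,'' that functions killed by all $Z_{\ov{k}}$ and functions with $\partial\ov{\partial}f=0$ on these compact non-complex nilmanifolds are constant, and your integration-by-parts argument is precisely the missing proof of those assertions. What the paper's route buys is bookkeeping: splitting off the center diagonalizes the system from the start, so no second-order manipulation and no explicit use of the nilpotency cancellation are needed.
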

\begin{proof}
In view of Theorem \ref{main}, $J$ induces a structure of nilmanifold on $M$. Let $G$ be the universal cover of $M$ and let $(\g,J)$ be the almost
complex Lie algebra associated to  $G$. We can find a $(1,0)$-frame
$\{Z_1,\dots,Z_l,W_1,\dots,W_m\}$ on $\g$ such that
$\{W_1,\dots,W_m,W_{\ov{1}},\dots,W_{\ov{m}}\}$ is a frame of the complexified of the center of $\g$. With respect to this frame we have
$$
[Z_i,Z_j]=\sum_{k=1}^m c_{ij}^{\ov{k}}\,W_{\ov{k}}\,,\quad i,j=1,\dots, l\,.
$$
Let
$\{\zeta_1,\dots,\zeta_l,\eta_1,\dots,\eta_m\}$ be the coframe dual to $\{Z_1,\dots,Z_l,W_1,\dots,W_m\}$; then
$$
{\rm d}\zeta_j=0\,,\quad j=1,\dots,l\,;\quad {\rm d}\eta_i=c^{i}_{\ov{r}\ov{s}}\,\zeta_{\ov{r}\ov{s}}\,,\quad  i=1,\dots,m\,.
$$
Identifying $\beta$ with its pull-back on the universal cover of $M$, we can write
$$
\beta=a_{ij}\zeta_{ij}+a_{ij}b_{ij}\zeta_i\wedge\eta_j+ d_{ij}\eta_{ij}
$$
and
$$
\ov{\beta}=
\ov{a_{ij}}\wedge\zeta_{\ov{i}\ov{j}}+\ov{b_{ij}}\zeta_{\ov{i}}\wedge\eta_{\ov{j}}+\ov{d_{ij}}\eta_{\ov{i}\ov{j}}\,,
$$
where $a_{ij},b_{ij},d_{ij}$ are smooth maps on $M$.
Since the coframe $\{\zeta_1,\dots,\zeta_l,\eta_1,\dots,\eta_m\}$ is $(\partial+\ov{\partial})$-closed, we have
$$
\ov{\partial}\beta=\ov{\partial}(a_{ij})\wedge\zeta_{ij}+\ov{\partial}(b_{ij})\wedge\zeta_i\wedge\eta_j+\ov{\partial}(d_{ij})\wedge\eta_{ij}\,.
$$
Furthermore
$$
A\ov{\beta}=\ov{b_{rs}}c^{\ov{s}}_{ij}\,\zeta_{\ov{r}ij}+2d_{\ov{rs}}c^{\ov{s}}_{ij}\,\eta_{\ov{r}}\wedge\zeta_{ij}=
(\ov{b_{rs}}c^{\ov{s}}_{ij}\,\zeta_{\ov{r}}+2d_{\ov{rs}}c^{\ov{s}}_{ij}\,\eta_{\ov{r}})\wedge\zeta_{ij}\,.
$$
Hence equation $\overline{\partial}\beta+A\ov{\beta}=0$ reads in terms of $a,b,d$'s as
$$
\begin{cases}
& \ov{\partial}(a_{ij})=\ov{b_{rs}}c^{\ov{s}}_{ij}\,\zeta_{\ov{r}}+2d_{\ov{rs}}c^{\ov{s}}_{ij}\,\eta_{\ov{r}}\\
& \ov{\partial}(b_{ij})=\ov{\partial}(d_{ij})=0
\end{cases}
$$
Since $M$ is compact the $b_{ij}$'s and the $d_{ij}$'s have to be constant on $M$. Moreover the $a_{ij}$'s satisfies
$$
\partial\ov{\partial}(a_{ij})=0\,.
$$
Again by the compactness of $M$ we get the $a_{ij}$'s are constant. Hence $\beta$ is a left-invariant form. Consequently it satisfies
$$
\partial\beta=\ov{\partial}\beta=0\,.
$$
Finally equation $\overline{\partial}\beta+A\ov{\beta}=0$ implies $A\beta=0$. Hence $\beta$ is a closed form on $M$.
\end{proof}
\noindent Now we can prove Theorem \ref{tosatti}
\begin{proof}[Proof of Theorem $\ref{tosatti}$]
Since by hypothesis the metric induced on $M$ by the pair $(g,J)$ is quasi-K\"ahler and Chern-flat, then $J$ induces a structure
of nilmanifold on $M$.
Let $(\g,J)$ be the Lie algebra associated to  $(M,J)$ and let $\omega_g$ be the K\"ahler form of $(g,J)$.
Then it has to be
$$
\omega=\omega_g+\beta+\ov{\beta}\,.
$$
being $\beta$ a $(2,0)$-form on $M$. Since $\omega_g$ is $\ov{\partial}$-closed, condition on $\omega$ to be closed in terms
of $\omega_g$ and $\beta$ reads as
$$
\begin{cases}
A\omega_g+\partial\beta=0\\
\overline{\partial}\beta+A\ov{\beta}=0\,.
\end{cases}
$$
In view of Lemma \ref{lemmatosatti} such equations imply ${\rm d}\omega_g=0$. There follows that $g$ is an almost-K\"ahler metric on $(M,J)$. Since $g$ is
Chern-flat, then $J$ is integrable (see \cite{TV}). Hence $g$ is a K\"ahler metric and $M$ is a torus.
\end{proof}

\end{document}